\crefname{hypothesis}{Hypothesis}{Hypotheses}
\title{Mean-Field Approximation of Dynamics on Networks\thanks{Submitted to the editors DATE.
\funding{J.A. Ward and G. Tim\'{a}r acknowledge funding from the Leverhulme Trust Project Grant number RPG-2023-187. P.L. Simon acknowledges support from the Hungarian Scientific Research Fund, OTKA (grant no. 135241) and from ERC Synergy Grant No. 810115 - DYNASNET.}}}
\author{Jonathan A. Ward\thanks{University of Leeds, UK
  (\email{j.a.ward@leeds.ac.uk}).}
\and G\'{a}bor Tim\'{a}r\footnotemark[2]
\and P\'{e}ter L. Simon\thanks{Eotvos Lorand University, Hungary}}
\begin{document}
\graphicspath{{./figures/}}

\maketitle

\begin{abstract}
Many real-world phenomena can be modelled as dynamical processes on networks, a prominent example being the spread of infectious diseases such as COVID-19. Mean-field approximations are a widely used tool to analyse such dynamical processes on networks, but these are typically derived using plausible probabilistic reasoning, introducing uncontrolled errors that may lead to invalid mathematical conclusions. In this paper we present a rigorous approach to derive mean-field approximations from the exact description of Markov chain dynamics on networks through a process of averaging called approximate lumping. We consider a general class of Markov chain dynamics on networks in which each vertex can adopt a finite number of ``vertex-states'' (e.g. susceptible, infected, recovered etc.), and transition rates depend on the number of neighbours of each type. Our approximate lumping is based on counting the number of each type of vertex-state in subsets of vertices, and this results in a density dependent population process. In the large graph limit, this reduces to a low dimensional system of ordinary differential equations, special cases of which are well known mean-field approximations. Our approach provides a general framework for the derivation of mean-field approximations of dynamics on networks that unifies previously disconnected approaches and highlights the sources of error.
\end{abstract}

\begin{keywords}
Complex systems, network science, dynamical systems, Markov chains.
\end{keywords}

\begin{MSCcodes}
37N99, 
60J28, 
91C99, 
92D25, 
92D30, 
05C82. 
\end{MSCcodes}

\section{Introduction}
Dynamical processes on networks are important and widely studied \cite{barrat2008dynamical,newman2018networks,porter2016dynamical,kiss2017book}. They have been used to study real-world phenomena, such as epidemics \cite{kiss2017book,pastor2015epidemic}, opinion dynamics \cite{galam2002minority,sood2005voter,sznajd2000opinion}, and spin systems with critical phenomena \cite{dorogovtsev2002ising,leone2002ferromagnetic,dorogovtsev2008critical}. Many such models can be described mathematically as Markov chains \cite{simon2011exact,ward2018general,ward2019exact}, but often their state-space is so large it is impossible to use mathematical tools from the theory of Markov chains. Instead it is standard to make use of ``mean-field'' approximations \cite{vazquez2008analytical,pastor2001epidemic,kiss2017book,gleeson2011high,fennell2017multistate,metz2025dynamical,crisanti2018path,hertz2016path}, in which aspects of network structure and dynamical correlations are ignored \cite{gleeson2012accuracy}.

A class of mean-field approximations, usually referred to as dynamical mean-field, concerns dynamics on networks encoded in the form of sets of Langevin-type or deterministic equations, where the disorder (in either network structure or interaction strengths) is generally averaged out using path-integral methods \cite{metz2025dynamical,crisanti2018path,de2022dynamical}.
Most mean-field approximations that attempt to provide a low-dimensional description of stochastic dynamics starting from the original Markov chain, however, tend to be based on plausible probabilistic reasoning, thus lacking a rigorous mathematical foundation \cite{vazquez2008analytical,pastor2001epidemic,kiss2017book,gleeson2011high,fennell2017multistate}.
Such approximations have the potential to introduce uncontrolled errors that limit the potential for mathematical analysis, since the mean-field approximation is not faithful to the original process. For example, controversy concerning the critical epidemic threshold in scale-free networks stemmed from the use of mean-field approximations \cite{pastor2001epidemic,gomez2008spreading,chatterjee2009contact,boguna2013nature}. Moreover, the assumptions that underpin mean-field approximations---the absence of clustering, modularity/community structure, dynamical correlations---are routinely violated by dynamical processes on real-world complex networks \cite{gleeson2012accuracy}. Thus it is difficult to know when a mean-field approximation will be accurate or how the error depends on the network structure or the dynamic \cite{ward2022micro}. Consequently, the quantification of approximation error has been recognised as a key challenge for network epidemic modellers \cite{pellis2015eight}.

In this paper, we develop a mathematical foundation for mean-field approximation that starts with the exact Markov chain description of a broad class of dynamical processes on networks where interactions are governed by vertices' local neighbourhoods. We use a technique called approximate lumping to derive a ``density dependent population process'' that in the large network limit converges to a relatively small set of differential equations. There are three significant advances on previous work \cite{ward2022micro}. Firstly, by basing the approximate lumping on a partition of vertices, we are able to incorporate network structure into the mean-field approximation in a very flexible way. Secondly, we combine two key techniques---approximate lumping and convergence of density dependent population processes---to connect the exponentially-large but exact micro-scale Markov chain, through to a highly reduced system of ordinary differential equations. Thirdly, we show that both degree-based and individual-based mean-field approximations can be captured in a unified way through our general framework. Crucially, our approach is rigorous, elucidates the averaging process and highlights sources of error.

We start by describing Markov chain dynamics on networks in Section~\ref{sec:background} and approximate lumping in Section~\ref{sec:lumping}. In Section~\ref{sec:vertexpartitionlumping} we describe how we use a partition of vertices to define an approximate lumping. The combinatorics to derive our mean-field approximation is involved, so in Section~\ref{sec:twocell} we describe the process for the case where the vertices are partitioned into two sets, before we generalise this to an arbitrary finite number of partitions in Section~\ref{sec:Ppartitions}. 
Furthermore, we present a simple example in the corresponding sections of the paper's Supplementary Materials. Readers interested in the working details of our approach may find it helpful to read Sections~SM2--SM5 of the Supplementary Materials as they read the corresponding sections of the main paper. We treat some special cases of our density dependent population process in Section~\ref{sec:applications}, then derive the large network limit mean-field equations in Section~\ref{sec:largeN}. In Sections~\ref{sec:dbMF} and \ref{sec:IBMF} we are then able to derive degree-based and individual-based mean-field approximations respectively, and we consider degree-based mean-field approximation of the configuration model in Section~\ref{sec:configmodel}. Finally we discuss our findings in Section~\ref{sec:discussion}.

\section{Mathematical background}
\label{sec:background}
Let $G=(V,E)$ denote a graph or network with vertex set $V$ and edge set $E\subset V\times V$, where the number of vertices is $N=|V|$. Unless otherwise stated, we consider dynamical processes on finite simple networks (i.e. undirected, unweighted with no self-loops or multiple edges) described by continuous-time Markov chains where each vertex can be in one of a finite number $M$ of \emph{vertex-states} and the set of possible vertex-states is $\mathcal{W}=\{\mathcal{W}_1,\mathcal{W}_2,\dots,\mathcal{W}_M\}$.

\subsection{State-space}  
The state-space of the Markov chain is the set of all permutations of $N$ vertex-states chosen from $\mathcal{W}$ with repetition. This is equivalent to $\mathcal{S}=\mathcal{W}^V$, i.e. the set of all functions from $V$ to $\mathcal{W}$, and so if the network is in state $S\in \mathcal{S}$ then the vertex-state of vertex $v\in V$ is $S(v)$. From here onwards we will refer to the states $S\in \mathcal{S}$ as \emph{microstates}, to clearly distinguish them from vertex-states and \emph{lumped states}, which will be introduced in Section~\ref{sec:lumping}. The number of microstates in $\mathcal{S}$ is $M^N$, where $N$ is the number of vertices, which is extremely large for even moderate $N$. However, since $\mathcal{S}$ is finite we can enumerate the microstates so that $\mathcal{S}=\{S^{[1]},S^{[2]},\dots,S^{[M^N]}\}$.

\subsection{Transitions}
We assume that changes in microstate correspond to a single vertex $v\in V$ changing its vertex-state, and the rate that this occurs is a function of only the number of $v$'s neighbours in each of the vertex-states. We also assume that this rate function is the same for all vertices. Thus we assume model dynamics are driven by local interactions captured via a collection of functions between vertex-states.
Note that a more general class of models would allow for behaviours in which multiple vertices change vertex-states at once, for example if a vertex exports its vertex-state to its neighbours \cite{ward2018general}. 
We will now give a precise definition of the network dynamics that we consider.
\begin{definition}
For a finite non-empty set of vertex-states $\mathcal{W}$ and $\mathcal{A},\mathcal{B}\in\mathcal{W}$, let 
$$\mathbf{R}_{\mathcal{A},\mathcal{B}}:\mathbb{Z}^M_{\ge 0}\rightarrow\mathbb{R}_{\ge0}.$$
A vertex-state transition matrix (VSTM) $\mathbf{R}$ is the collection of functions $\mathbf{R}_{\mathcal{A},\mathcal{B}}$ for each $\mathcal{A},\mathcal{B}\in\mathcal{W}$.
\end{definition}
In the models we consider, $\mathbf{R}_{\mathcal{A},\mathcal{B}}(n_1,n_2,\dots,n_{M})\ge0$ gives the rate that a vertex in vertex-state $\mathcal{A}$ changes to vertex-state $\mathcal{B}$ if it has $n_1$ neighbours in vertex-state $\mathcal{W}_1$, $n_2$ neighbours in vertex-state $\mathcal{W}_2$, etc. 
If transitions between a pair of vertex-states $\mathcal{A},\mathcal{B}\in\mathcal{W}$ do not occur in a particular model, then the rate $\mathbf{R}_{\mathcal{A},\mathcal{B}}$ is identically zero. 
\begin{definition}
A homogeneous Single-Vertex Transition model (SVT) is a pair $(\mathcal{W},\mathbf{R})$.
\end{definition}
We think of an SVT $\mathcal{M}=(\mathcal{W},\mathbf{R})$ as a directed graph over vertex-states where a directed edge goes from vertex-state $\mathcal{A}$ to $\mathcal{B}$ if $\mathbf{R}_{\mathcal{A},\mathcal{B}}$ is not identically zero.
In this paper, our main focus will be on models whose VSTMs are affine functions, so 
\begin{equation}
\mathbf{R}_{\mathcal{A},\mathcal{B}}(n_1,n_2,\dots,n_{M})=\zeta_0^{\mathcal{A},\mathcal{B}}+\sum_{m=1}^{M}\zeta_m^{\mathcal{A},\mathcal{B}}n_m,
\label{eq:affinevstm}
\end{equation}
where all of the constants $\zeta_m^{\mathcal{A},\mathcal{B}}$ are non-negative. We will refer to SVTs with affine VSTMs as affine SVTs.
Most SVTs have VSTMs of this form \cite{ward2019exact}, although notable exceptions include the non-zero temperature Ising-Glauber dynamics \cite{glauber1963time}, the nonlinear $q$-voter model \cite{castellano2009nonlinear} and threshold models \cite{watts2002simple}. 

\subsection{Kolmogorov equations: infinitesimal generator}
Given the network $G$ and model $\mathcal{M}$, we need to define the corresponding continuous-time Markov chain. Let $X(t)=(X_1(t),X_2(t),\dots,X_{M^N}(t))^{\rm T}$ be the time-dependent Markov chain probability distribution over $\mathcal{S}$, where $X_i(t)$ is the probability of being in microstate $S^{[i]}$ at time $t$. The evolution of $X(t)$ is then given by the forward Kolmogorov or master equation \cite{kijima1997markov}, $$\dot{X}=\mathbf{Q}^{\rm T}X,$$ where $\mathbf{Q}$ is the infinitesimal generator, an $M^N$ by $M^N$ matrix in which each off-diagonal component $\mathbf{Q}_{kl}$ gives the transition rate from $S^{[k]}$ to $S^{[l]}$, and the diagonal components ensure that rows sum to zero so that probability is conserved. We assume that a vertex changes vertex-state instantaneously, thus transitions only occur between pairs of microstates that differ in exactly one vertex-state. 
\begin{definition}
A pair of states $S^{[k]},S^{[l]}\in\mathcal{S}$ forms a {\bf transition pair} with transition vertex $v$, denoted $S^{[k]}\stackrel{v}{\sim}S^{[l]}$, if $S^{[k]}(v)\ne S^{[l]}(v)$ and $S^{[k]}(u)=S^{[l]}(u)$ for all $u\ne v$.
\end{definition}
For vertex $v$ and microstate $S^{[k]}$ let $n^{[k]}(v)=(n^{[k]}_1(v),n^{[k]}_2(v),\dots,n^{[k]}_{M}(v))$, where $n^{[k]}_m$ is the number of neighbours of $v$ with vertex-state $\mathcal{W}_m\in\mathcal{W}$ in microstate $S^{[k]}$.
Thus for $S^{[k]},S^{[l]}\in\mathcal{S}$ and $S^{[k]}\ne S^{[l]}$, the transition rate from $S^{[k]}$ to $S^{[l]}$ in an SVT is given by
\begin{equation*}
\mathbf{Q}_{kl}=\left\{\begin{array}{cc} \mathbf{R}_{S^{[k]}(v),S^{[l]}(v)}(n^{[k]}(v)) 
& \text{if $S^{[k]}\stackrel{v}{\sim}S^{[l]}$}\\ 0 &
\text{otherwise}
\end{array}\right.,
\end{equation*}
where vertex $v$ is the transition vertex (if the states $S^{[k]}$ and $S^{[l]}$ form a transition pair) that goes from vertex-state $S^{[k]}(v)$ to $S^{[l]}(v)$.
  
\section{Coarse-graining via lumping: theoretical foundation}
\label{sec:lumping}
We consider \emph{lumping} of Markov chains \cite{kemeny1960finite}. 
Let $\Pi_\mathcal{S}=\{\mathcal{S}_1,\mathcal{S}_2,\dots,\mathcal{S}_{n}\}$ be a partition of microstate-space, so $\mathcal{S}_i\cap\mathcal{S}_j=\emptyset$ for each $i\ne j$, and $\cup_i\mathcal{S}_i=\mathcal{S}$. An \emph{exact lumping} is a partition of microstate-space $\Pi_\mathcal{S}$ that preserves the Markov property, a necessary and sufficient condition for which is that the sum of transition rates from microstate $S^{[k]}\in \mathcal{S}_i$ to microstates in the cell $\mathcal{S}_j$, i.e. $$\sum_{S^{[l]}\in S_j}\mathbf{Q}_{kl},$$ is the same for all microstates $S^{[k]}$ in the cell $\mathcal{S}_i$.
In matrix notation \cite{ward2019exact}, this is equivalent to the existence of an $n\times n$ matrix $\mathbf{q}$ such that
\begin{equation}
\mathbf{QC}=\mathbf{Cq},
\label{eq:lumpability}
\end{equation}
where $\mathbf{C}\in\{0,1\}^{M^N\times n}$ is the \emph{collector matrix} \cite{buchholz1994exact} whose $kj$th component is
\begin{equation}
\mathbf{C}_{kj}=\left\{\begin{array}{cc}
    1 & \text{if $S^{[k]}\in \mathcal{S}_j$},\\
    0 & \text{otherwise}. \end{array}\right.
\label{eq:col}
\end{equation}
The collector matrix collects those microstates in a column that belong to the same cell, or in other words the same ``lumped state'', in the partition. 

We call \eqref{eq:lumpability} the \emph{lumpability condition}.
Note that $\mathbf{q}$ can be given explicitly for an exact lumping by introducing the \emph{distributor matrix} \cite{buchholz1994exact} $\mathbf{D}\in\mathbb{R}^{n\times M^N}$, whose $il$th component is
\begin{equation}
\mathbf{D}_{il}=\left\{\begin{array}{cc}
    \frac{1}{|\mathcal{S}_i|} & \text{if $S^{[l]}\in \mathcal{S}_i$},\\
    0 & \text{otherwise}. \end{array}\right.
\label{eq:dis}
\end{equation}
Specifically, $\Pi_{\mathcal{S}}$ satisfies the lumpability condition when $\mathbf{Q}$ commutes with $\mathbf{CD}$ \cite{ward2019exact}. Note that $\mathbf{DC}=\mathbf{I}$, the identity matrix, hence multiplying \eqref{eq:lumpability} by $\mathbf{D}$ we get the generator $\mathbf{q}$ of the lumped system explicitly as
\begin{equation}
\mathbf{q}=\mathbf{DQC} .
\label{eq:qDQC}
\end{equation}

We use $x(t)=(x_1(t),\dots,x_n(t))^T$ to denote the time dependent Markov chain probability distribution over $\Pi_\mathcal{S}$, where $x_i(t)$ is the probability of being in the lumping partition cell $\mathcal{S}_i$.
For this reason, we will refer to $\mathcal{S}_i$ as a \emph{lumped state}.
When the lumpability condition is satisfied, the evolution of $x(t)$ is determined by the lumped master equation
\begin{equation}
\dot{x}=\mathbf{q}^{T}x,
\label{eq:Ydot}
\end{equation}
and if $x(0)=\mathbf{C}^{\rm T}X(0)$ then we have $x(t)=\mathbf{C}^{\rm T}X(t)$ for all $t$. In other words, for each $\mathcal{S}_i\in\Pi_{\mathcal{S}}$, the sum of the probabilities of being in each microstate in $\mathcal{S}_i$ at time $t$ is equal to $x_i(t)$.

A lumping that does not satisfy the lumpability condition is an \emph{approximate lumping} \cite{buchholz1994exact}. Given a partition $\Pi_\mathcal{S}$ of microstate-space that does not satisfy the lumpability condition \eqref{eq:lumpability}, our approach is to still use the set of lumped states $\Pi_{\mathcal{S}}$ and the corresponding generator $\mathbf{q}=\mathbf{DQC}$, then solve the lumped master equation \eqref{eq:Ydot} for $x(t)$. Note that while this defines a Markov chain, it does not directly correspond to the SVT that it has been derived from, so we do not expect $x(t)$ to equal $\mathbf{C}^{\rm T}X(t)$ for all $t$. We will however assume that the initial condition of the approximate lumping can be chosen so that $x(0)=\mathbf{C}^{\rm T}X(0)$.
Using the definition of $\mathbf{C}$ \eqref{eq:col}, we have 
\begin{equation}
(\mathbf{QC})_{kj}=\sum_{l=1}^{M^N}\mathbf{Q}_{kl}\mathbf{C}_{lj} = \sum_{S^{[l]}\in \mathcal{S}_j}\mathbf{Q}_{kl}, \label{eq:rij}
\end{equation}
i.e. $(\mathbf{QC})_{kj}$ is the sum of the rates out of the microstate $S^{[k]}$ into the $j^{\textrm{th}}$ lumped state when $S^{[k]}\not\in \mathcal{S}_j$ and minus the sum of rates out of microstate $S^{[k]}$ when $S^{[k]}\in \mathcal{S}_j$. Then using the definition of the distributor matrix \eqref{eq:dis} we have
\begin{equation}
    \mathbf{q}_{ij}=(\mathbf{DQC})_{ij}= \sum_{k=1}^{M^N}\mathbf{D}_{ik}(\mathbf{QC})_{kj} = \sum_{k=1}^{M^N}\mathbf{D}_{ik}\sum_{S^{[l]}\in \mathcal{S}_j}\mathbf{Q}_{kl} = \frac{1}{|\mathcal{S}_i|}\sum_{S^{[k]}\in \mathcal{S}_i}\sum_{S^{[l]}\in \mathcal{S}_j}\mathbf{Q}_{kl}.%
    \label{eq:WQVij}
\end{equation}
Thus $\mathbf{q}_{ij}$ is the average of the sum of rates out of microstates in the $i$th lumped state and into the $j$th lumped state. 

Summarising, we can say that starting from the full infinitesimal generator, $\mathbf{Q}$, and choosing a partition of the state-space, equation \eqref{eq:WQVij} yields the infinitesimal generator $\mathbf{q}$ of the lumped (coarse-grained) system. Note that the partition of the microstate-space is encoded in the collector and distributor matrices, $\mathbf{C}$ and $\mathbf{D}$ respectively.

\section{Lumping based on vertex set partitions}
\label{sec:vertexpartitionlumping}
In the previous section we introduced the notion of lumping in general, however we have not yet described how we will determine the partition of microstate-space, on which the lumping is based. This will be dealt with in this section. 

\subsection{Motivating example}
As a simple example, consider an SIS epidemic on a graph with three nodes. Before we consider specific graphs, we first define some notation. We will use $\mathcal{B}$ (for blue) to denote susceptible vertices and $\mathcal{R}$ (for red) to denote infected vertices, and we will represent a microstate with three such letters. An important property of a useful lumping is that two microstates in the same partition cell should have the same number of infected nodes. This ensures that the total number of infected nodes (i.e. the prevalence) can be determined from the lumped system as well. The simplest lumping that preserves this property contains the following four lumping classes: $\mathcal{S}_1 = \{ \mathcal{BBB}\}$, $\mathcal{S}_2 = \{ \mathcal{BBR},\mathcal{BRB},\mathcal{RBB} \}$, $\mathcal{S}_3 = \{ \mathcal{BRR},\mathcal{RBR},\mathcal{RRB} \}$, $\mathcal{S}_4 = \{ \mathcal{RRR} \}$. 
Generalizing this idea to arbitrary dynamics, it has been shown that approximate lumping based on partitions of microstate-space into sets of microstates with the same number of vertices in each vertex-state result in mean-field birth-death processes for $M=2$ and mean-field population models for $M>2$ \cite{ward2022micro}. 

Returning to the example above, one can realise that the lumping does not take into account the graph structure at all. For example, having a path graph with three nodes, the degree of the central node is two, while that of the nodes at the left and right end is only one, which could be reflected by the lumping. A natural choice of lumping with a finer partition is the following: 
$\mathcal{S}_1 = \{ \mathcal{BBB} \}$, $\mathcal{S}_2 = \{ \mathcal{BBR},\mathcal{RBB} \}$, $\mathcal{S}_3 = \{ \mathcal{BRB} \}$, $\mathcal{S}_4 = \{ \mathcal{BRR},\mathcal{RRB} \}$, $\mathcal{S}_5 = \{ \mathcal{RBR} \}$, $\mathcal{S}_6 = \{ \mathcal{RRR} \}$. 
In this lumping partition, the central node plays a different role than the end nodes. This will be interpreted as follows: the nodes are divided into two groups, the central node and the end nodes. The lumping is based on the number of infected nodes in these groups. The number of infected nodes in the first group can be 0 or 1 while in the second group it can be 0, 1 or 2. Thus the lumping classes above can be encoded by the number of infected nodes in the two groups as follows: $\mathcal{S}_1$ by $s^{[1]}=(0,0)$, $\mathcal{S}_2$ by $s^{[2]} =(0,1)$, $\mathcal{S}_3$ by $s^{[3]}=(1,0)$, $\mathcal{S}_4$ by $s^{[4]}=(1,1)$, $\mathcal{S}_5$ by $s^{[5]} =(0,2)$ and $\mathcal{S}_6$ by $s^{[6]}=(1,2)$. (Note that our notation distinguishes between subsets of microstates in a lumping class, such as $\mathcal{S}_1$, and the encoding of that lumping class according to the number of infected vertices in each group, such as  $s^{[1]}$.) This encoding can be extended by the number of susceptible nodes, which in this example is redundant information, so that for example
$$
\mathbf{s}^{[5]} =\left(\begin{array}{cc}
1 & 0\\
0 & 2
\end{array}\right),
$$
where the first row of the matrix shows the number of susceptible nodes in the two groups, while the second row contains the number of infected nodes. The other lumped states can be similarly encoded by $2\times 2$ matrices. This notation will be used below for the general case.

\subsection{Microstate-space partition based on a vertex set partition}
Now we generalise the above approach by considering partitions of the \emph{vertex} set, where the lumped states are based on partitions of microstate-space into sets of microstates with the same number of vertices in each vertex-state \emph{within} each of the cells of the \emph{vertex} partition. This defines the lumped states that we consider, and so in the remainder of the paper we will primarily refer to lumped states in terms of these counts, rather than cells in the partition of microstate-space $\Pi_{\mathcal{S}}$ (like $\mathcal{S}_i$, which are subsets of microstates).

Let $\Pi_V=\{V_1,\dots,V_P\}$ be a partition of the vertex set $V$, such that $V_p\cap V_q=\emptyset$ for $p\ne q$ and $\cup_{p}V_p=V$.
We now define the lumped macrostate-space and the corresponding partition of microstate-space precisely.
\begin{definition}
For a network $G=(V,E)$, with $N=|V|$, and an SVT $\mathcal{M}=(\mathcal{W},\mathbf{R})$, with $M=|\mathcal{W}|$, let $\Pi_V$ be a vertex-partition, where $P=|\Pi_V|$ and $N_p=|V_p|$ for each $V_p\in\Pi_V$. 
The corresponding \emph{vertex-partition macrostate-space} is the set of non-negative, integer valued matrices $\mathbf{s}\in\mathbb{Z}^{M\times P}_{\ge0}$ such that
\begin{equation}
\sum_{m=1}^M \mathbf{s}_{m,p}=N_p.
\label{eq:sumvstates}
\end{equation}
\label{def:vpppms}
\end{definition}
Note that since $\sum_{p=1}^PN_p=N$, we have $\sum_{p=1}^P\sum_{m=1}^M\mathbf{s}_{m,p}=N.$
\begin{definition}
Let $(\mathbf{s}^{[1]},\mathbf{s}^{[2]},\dots,\mathbf{s}^{[n]})$ be an enumeration of a vertex-partition macrostate-space. A \emph{vertex-partition lumping} is a partition of microstates $\Pi_{\mathcal{S}}=\{\mathcal{S}_1,\mathcal{S}_2,\dots,\mathcal{S}_n\}$ such that for $\mathcal{S}_i\in\Pi_{\mathcal{S}}$, the number of vertices in vertex-state $\mathcal{W}_m\in\mathcal{W}$ in vertex-partition cell $V_p\in\Pi_V$ is $\mathbf{s}^{[i]}_{m,p}$.
\label{def:vpal}
\end{definition}

\section{The lumped generator for two cell vertex-partitions}
\label{sec:twocell}
We now describe the approximate lumping approach using two vertex-partition cells for finite graphs and homogeneous SVTs. 
The fully general case with a finite number of vertex-partitions cells will be presented in Section~\ref{sec:Ppartitions}. 
Let us consider the case $P=2$ in Definition \ref{def:vpppms}, i.e. denote the vertex partition by $\Pi_{V}=\{V_1,V_2\}$, where $V_1\cap V_2=\emptyset$ and $V_1\cup V_2=V$; also $N_1=|V_1|$ and $N_2=|V_2|$. 
For finite $M$ and $P=2$, a lumped state will be denoted by a matrix $\mathbf{s}\in\mathbb{Z}^{M\times P}_{\ge0}$ whose $m,p$th entry, $\mathbf{s}_{m,p}$, is the number of vertices in vertex-state $\mathcal{W}_m$ in the vertex partition cell $V_p$. Thus summing the entries in the $p$th column of $\mathbf{s}$ yields $N_p$, as in \eqref{eq:sumvstates}, and summing all entries yields $N$.
A lumped state corresponds to choosing $N_1$ vertices from the $M$ possible vertex-states with repetition in the first partition and $N_2$ vertices from the $M$ possible vertex-states with repetition in the second partition. Thus the total number of lumped states is 
\begin{equation}
n={N_1+M-1\choose N_1}{N_2+M-1\choose N_2}\label{eq:nlumpedstates}.
\end{equation}
Consequently we can number the lumped states as  $\mathbf{s}^{[1]},\mathbf{s}^{[2]},\dots,\mathbf{s}^{[n]}$, and the lumping partition is then $\Pi_{\mathcal{S}}=\{\mathcal{S}_1,\mathcal{S}_2,\dots,\mathcal{S}_{n}\}$, where 
$$\mathcal{S}_i=\left\{S^{[j]}\in \mathcal{S}\;\middle|\;\sum_{v\in V_p}
\delta_{S^{[j]}(v),\mathcal{W}_m}=\mathbf{s}_{mp}^{[i]},
\;\mathcal{W}_m\in\mathcal{W}, V_p\in\Pi_V\right\},$$
and $\delta_{\mathcal{A},\mathcal{B}}$ is the Kronecker delta function.
Note that for a given microstate $S^{[j]}$, this simply counts the number of vertices in each vertex-state and in each vertex partition cell, and checks whether these match the corresponding values in $\mathbf{s}^{[i]}$; if they do, then microstate $S^{[j]}$ is assigned to partition cell $\mathcal{S}_i$.
We use $\mathbf{s}_p=(\mathbf{s}_{1,p},\mathbf{s}_{2,p},\dots,\mathbf{s}_{M,p})^{\rm T}$
to denote the $p$th column of $\mathbf{s}$. We will only use a single subscript on $\mathbf{s}$ when referring to this vector.

\subsection{The lumped generator for an arbitrary two cell vertex-partition}
We will start by considering the transition rate from an arbitrary lumped state $\mathbf{s}^{[i]}$ to another arbitrary lumped state $\mathbf{s}^{[j]}\ne\mathbf{s}^{[i]}$. In Section~\ref{sec:lumping}, we saw that in general this is given by 
\begin{equation}
    \mathbf{q}_{ij}=\frac{1}{|\mathcal{S}_i|}\sum_{S^{[k]}\in \mathcal{S}_i}\sum_{S^{[l]}\in \mathcal{S}_j}\mathbf{Q}_{kl},
    \label{eq:qij}
\end{equation}
but it turns out that rather than summing over microstates, as \eqref{eq:qij} suggests, it is easier to consider the possible transitions of individual vertices and sum their rates.

First note that the number of arrangements of vertex-states over vertices in the $p$th cell that correspond to $\mathbf{s}^{[i]}_p$ is the multinomial
$${N_p \choose \mathbf{s}^{[i]}_p}:={N_p \choose \mathbf{s}^{[i]}_{1,p}\; \mathbf{s}^{[i]}_{2,p} \;\cdots\; \mathbf{s}^{[i]}_{M,p}},$$
so the total number of microstates in the lumping partition cell $\mathcal{S}_i$ (which corresponds to the lumped state $\mathbf{s}^{[i]}$) is
$$|\mathcal{S}_i|={N \choose \mathbf{s}^{[i]}}:=\prod_{p=1}^P{N_p \choose \mathbf{s}^{[i]}_p}.$$ 
Note we will always have a scalar on the top part of our generalised multinomial notation. When we have a vector on the bottom part, the notation corresponds to a multinomial over the entries in the vector, and when we have a matrix on the bottom, it will correspond to the \emph{product} of the multinomials of each of the columns of the matrix. We will also assume the typical convention that if any of the terms in the multinomial are negative, or if the sum of the terms on the bottom of the multinomial are not equal to the top, then the value of the multinomial is zero.

Without loss of generality we assume that the transition rate $\mathbf{q}_{ij}$ from the lumped state $\mathbf{s}^{[i]}$ to the lumped state $\mathbf{s}^{[j]}$ corresponds to a vertex in vertex-partition $q\in\{1,2\}$ transitioning from vertex-state $\mathcal{A}\in\mathcal{W}$ to $\mathcal{B}\in\mathcal{W}$, where $\mathcal{A}\ne\mathcal{B}$. 
To understand why, suppose that $S^{[k]}\in\mathcal{S}_i$, then for there to be a non-zero rate from $\mathbf{s}^{[i]}$ to $\mathbf{s}^{[j]}$ there must be a microstate $S^{[l]}\in\mathcal{S}_j$ such that $S^{[k]}$ and $S^{[l]}$ form a transition pair, whose transition vertex is $v$ say. We are free to use $V_q$ to label which vertex-partition cell $v$ belongs to, and we may also use the labels $\mathcal{A},\mathcal{B}\in\mathcal{W}$ to indicate what vertex-states $v$ changes from and to respectively. Moreover, any other non-zero transition rate from $\mathbf{s}^{[i]}$ to $\mathbf{s}^{[j]}$ must also correspond to a vertex in $V_q$ changing from $\mathcal{A}$ to $\mathcal{B}$, otherwise it would result in different counts of vertices of each type of vertex-state in each vertex-partition cell, i.e. a lumped state different to $\mathbf{s}^{[j]}$.
To compute \eqref{eq:qij}, for each $v\in V_q$ we can construct all possible microstates where $v$ has vertex-state $\mathcal{A}$. If in this process we specify the vertex-states of the neighbours of $v$, then we can determine the rate at which $v$ changes from $\mathcal{A}$ to $\mathcal{B}$. Summing this contribution from all possible cases yields $\mathbf{q}_{ij}$. A proof of this will be given in Section~\ref{sec:Ppartitions}. In the following paragraphs we will build up the components of this sum.

First we define some notation related to the neighbourhoods of vertices.
Let $d_p^v$ denote the number of neighbours of vertex $v$ in the $p$th vertex-partition cell. 
The degree of vertex $v$ is
\begin{equation}
    d^v:=\sum_{p=1}^Pd^v_p.
    \label{eq:dv}
\end{equation}
We represent the neighbourhood of $v$ using a non-negative, integer-valued matrix $\mathbf{n}^v\in \mathbb{Z}^{M\times P}_{\ge 0}$, which we call a \emph{neighbourhood count}. 
\begin{definition}
For a vertex $v\in V$, a \emph{neighbourhood count} is a matrix $\mathbf{n}^v\in\mathbb{Z}^{M\times P}_{\ge 0}$ such that $$\sum_{m=1}^M\mathbf{n}^v_{m,p}=d^v_p,$$
for $0\le p\le P$.
\end{definition}
Note that it follows from \eqref{eq:dv} that $\sum_{p=1}^P\sum_{m=1}^M\mathbf{n}^v_{m,p}=d^v.$
We will use a single index on this matrix to indicate a column, i.e.
$\mathbf{n}^v_p\in\mathbb{Z}^{M}_{\ge 0}$ is the $p$th column of $\mathbf{n}^v$.
The component $\mathbf{n}^v_{m,p}$ of a neighbourhood count is the number of neighbours of vertex $v$ in the $m$th vertex-state and in the $p$th vertex-partition cell.

The number of ways that we can arrange the vertex-states of the neighbours of $v$ in vertex-partition cell $p\neq q$ according to some $\mathbf{n}^v_p$, as well as the vertex-states of the other vertices in vertex-partition cell $p\neq q$ according to $\mathbf{s}_p$ is
\begin{equation}
A(\mathbf{s}_p,\mathbf{n}^v_p):=\binom{d^v_p}{\mathbf{n}^v_{p}}\binom{N_p-d^v_p}{\mathbf{s}_p-\mathbf{n}^v_p}.
\label{eq:arrangements}
\end{equation}
Recall that a vector in the bottom of the multinomial coefficient notation indicates that the elements of the vector should be in the denominator of the multinomial coefficient.
While in \eqref{eq:arrangements} we have used $d^v_p$ and $N_p$ in the top of the multinomial coefficients, we will assume that these values are actually determined from summing the vectors in the bottom. We will also assume the standard convention that a multinomial coefficient is zero if any entry is negative.

For the vertex-partition cell $q$, which contains $v$, the number of ways that we can arrange the neighbours of $v$ according to $\mathbf{n}^v_q$ is
\begin{equation*}
	A(\mathbf{s}_q-\mathbf{e}_{\mathcal{A}},\mathbf{n}^v_q)=\binom{d^v_q}{\mathbf{n}^v_{q}}\binom{N_q-1-d^v_q}{\mathbf{s}_q-\mathbf{e}_{\mathcal{A}}-\mathbf{n}^v_q},
\end{equation*}
where $\mathbf{e}_{\mathcal{A}}$ is a vector of length $M$ with a one in the entry corresponding to vertex-state $\mathcal{A}$ and zeros elsewhere. This is to account for the fact that we assumed vertex $v$ is in vertex-state $\mathcal{A}$, so there is one less $\mathcal{A}$ vertex in $\mathbf{s}_q$. 

For a given $\mathbf{n}^v$ and $p\ne q$, using \eqref{eq:arrangements} there are
\begin{equation}
A(\mathbf{s}^{[i]}_q-\mathbf{e}_{\mathcal{A}},\mathbf{n}^v_q)A(\mathbf{s}^{[i]}_p,\mathbf{n}^v_p)
\label{eq:nostatesvIq}
\end{equation}
microstates in $\mathcal{S}_i$ in which vertex $v$ in vertex partition cell $q$ is in vertex-state $\mathcal{A}$, its neighbours' vertex-states correspond to $\mathbf{n}^v$, and the total number of vertices in each vertex-state and in each vertex-partition cell corresponds to $\mathbf{s}^{[i]}$. We also need to know the rate that vertex $v$ will change from vertex-state $\mathcal{A}$ to $\mathcal{B}$.
For SVTs with affine VSTM given by \eqref{eq:affinevstm}, if vertex $v$ is in vertex-state $\mathcal{A}$ and the number of its neighbours in each of the vertex-states in each of the vertex-partition cells is given by $\mathbf{n}^v$, then it will transition from $\mathcal{A}$ to $\mathcal{B}$ with rate 
$$\zeta_0^{\mathcal{A},\mathcal{B}}+\sum_{m=1}^{M}\sum_{r=1}^{P}\zeta_m^{\mathcal{A},\mathcal{B}}\mathbf{n}^v_{m,r}.$$

Recall that we assumed, without loss of generality, that $\mathbf{q}_{ij}$ corresponds to a vertex in vertex-partition cell $V_q$ changing from vertex-state $\mathcal{A}\in\mathcal{W}$ to $\mathcal{B}\in\mathcal{W}$.
We can now compute $\mathbf{q}_{ij}$ by summing over all feasible realisations of the matrix $\mathbf{n}^v$ and vertices in $V_q$, which yields
\begin{equation}
\mathbf{q}_{ij}=\frac{1}{{N\choose \mathbf{s}^{[i]}}}\sum_{v\in V_q}\sum_{\mathbf{n}^v_1|d_1^v}\sum_{\mathbf{n}^v_2|d_2^v}
\left(\zeta_0^{\mathcal{A},\mathcal{B}}+\sum_{m=1}^{M}\sum_{r=1}^{P}\zeta_m^{\mathcal{A},\mathcal{B}}\mathbf{n}^v_{m,r}\right)
A(\mathbf{s}^{[i]}_q-\mathbf{e}_{\mathcal{A}},\mathbf{n}^v_q)A(\mathbf{s}^{[i]}_p,\mathbf{n}^v_p).
\label{eq:Rij2}
\end{equation}
In this equation, the sums over $\mathbf{n}^v_1|d_1^v$ and $\mathbf{n}^v_2|d_2^v$ specify the $P=2$ columns of $\mathbf{n}^v$.

It seems that we have swapped one difficult sum, \eqref{eq:qij}, for another, \eqref{eq:Rij2}. However, it turns out that the sum over vertex neighbourhoods can be simplified, and we will illustrate how this can be done in the next section.

\subsection{Simplified form of the lumped generator}
\label{sec:simplifiedgen}
Our next step is to simplify \eqref{eq:Rij2}.
Crucial to achieving this is a generalisation of the Vandermonde identity and what we call the ``sum-product property'', which for arbitrary $v$ can be used to obtain
\begin{equation}
{N\choose \mathbf{s}}=\sum_{\mathbf{n}^v_1|d_1^v}\sum_{\mathbf{n}^v_2|d_2^v}
A(\mathbf{s}_1,\mathbf{n}^v_1)A(\mathbf{s}_2,\mathbf{n}^v_2).
\label{eq:VanDerMondeP2}
\end{equation}
A generalisation of this will be presented in Section~\ref{sec:Ppartitions} and further details can be found in the Supplementary Materials.
First consider the $\zeta_0^{\mathcal{A},\mathcal{B}}$ term in \eqref{eq:Rij2}, then using  
\eqref{eq:VanDerMondeP2} we find
\begin{align*}
	&\frac{1}{{N\choose \mathbf{s}^{[i]}}}\sum_{v\in V_q}\sum_{\mathbf{n}^v_1|d_1^v}\sum_{\mathbf{n}^v_2|d_2^v}
	\zeta_0^{\mathcal{A},\mathcal{B}}
	A(\mathbf{s}^{[i]}_q-\mathbf{e}_{\mathcal{A}},\mathbf{n}^v_q)A(\mathbf{s}^{[i]}_p,\mathbf{n}^v_p)\\
	&=\frac{1}{{N\choose \mathbf{s}^{[i]}}}\sum_{v\in V_q}
	\zeta_0^{\mathcal{A},\mathcal{B}}
	\binom{N_q-1}{\mathbf{s}^{[i]}_q-\mathbf{e}_{\mathcal{A}}}
	\binom{N_p}{\mathbf{s}^{[i]}_p}
	=\sum_{v\in V_q}\zeta_0^{\mathcal{A},\mathcal{B}}\frac{\mathbf{s}_{\mathcal{A},q}^{[i]}}{N_q}
	=\zeta_0^{\mathcal{A},\mathcal{B}}\mathbf{s}_{\mathcal{A},q}^{[i]},
\end{align*}
where we have used $\mathcal{A}$ as an index in $\mathcal{W}$.
This is what we might have expected: each microstate in $\mathbf{s}^{[i]}$ has $\mathbf{s}_{\mathcal{A},q}^{[i]}$ vertices in vertex partition cell $q$ in vertex-state $\mathcal{A}$, so there will be a factor of $\zeta_0^{\mathcal{A},\mathcal{B}}$ for each of these.

Now consider the $\zeta_m^{\mathcal{A},\mathcal{B}}$ terms in \eqref{eq:Rij2}. The sums over $m$ and $r$ in \eqref{eq:Rij2} do not depend on any of the others sums, so we can move these to the outside. 
Note that
\begin{align*}
	\mathbf{n}^v_{m,p}A(\mathbf{s}_p,\mathbf{n}^v_p)&=
	\mathbf{n}^v_{m,p}\binom{d^v_p}{\mathbf{n}^v_p}\binom{N_p-d^v_p}{\mathbf{s}_p-\mathbf{n}^v_p}\\
	&=d^v_p\binom{d^v_p-1}{\mathbf{n}^v_p-\mathbf{e}_m}\binom{N_p-1-(d^v_p-1)}{\mathbf{s}_p-\mathbf{e}_m-(\mathbf{n}^v_p-\mathbf{e}_m)}\\
	&=d^v_pA(\mathbf{s}_p-\mathbf{e}_m,\mathbf{n}^v_p-\mathbf{e}_m).
\end{align*}
For the case where $r=q$, it follows that
\begin{align*}
	&\sum_{\mathbf{n}^v_1|d_1^v}\sum_{\mathbf{n}^v_2|d_2^v}\zeta_m^{\mathcal{A},\mathcal{B}}\mathbf{n}^v_{m,q}
	A(\mathbf{s}^{[i]}_q-\mathbf{e}_{\mathcal{A}},\mathbf{n}^v_q)A(\mathbf{s}^{[i]}_p,\mathbf{n}^v_p)\\
	&=\zeta_m^{\mathcal{A},\mathcal{B}}d_q^v\sum_{\mathbf{n}^v_1|d_1^v}\sum_{\mathbf{n}^v_2|d_2^v}
	A(\mathbf{s}^{[i]}_q-\mathbf{e}_{\mathcal{A}}-\mathbf{e}_m,\mathbf{n}^v_q-\mathbf{e}_m)A(\mathbf{s}^{[i]}_p,\mathbf{n}^v_p)\\
	&=\zeta_m^{\mathcal{A},\mathcal{B}}d_q^v
	\binom{N_q-2}{\mathbf{s}^{[i]}_q-\mathbf{e}_{\mathcal{A}}-\mathbf{e}_m}
	\binom{N_p}{\mathbf{s}^{[i]}_p}\\
\end{align*}
where we have used \eqref{eq:VanDerMondeP2} to obtain the third line. 
Similarly, for the case $r=p$ we have 
\begin{align*}
	\sum_{\mathbf{n}^v_1|d_1^v}\sum_{\mathbf{n}^v_2|d_2^v}\zeta_m^{\mathcal{A},\mathcal{B}}\mathbf{n}^v_{m,p}
	A(\mathbf{s}^{[i]}_q-\mathbf{e}_{\mathcal{A}},\mathbf{n}^v_q)A(\mathbf{s}^{[i]}_p,\mathbf{n}^v_p)
	=\zeta_m^{\mathcal{A},\mathcal{B}}d_p^v
	\binom{N_q-1}{\mathbf{s}^{[i]}_q-\mathbf{e}_{\mathcal{A}}}
	\binom{N_p-1}{\mathbf{s}^{[i]}_p-\mathbf{e}_m}.
\end{align*}
Thus for the $\zeta_m^{\mathcal{A},\mathcal{B}}$ terms in \eqref{eq:Rij2} we can cancel multinomial terms to get
\begin{align*}
	&\frac{1}{{N\choose \mathbf{s}^{[i]}}}\sum_{v\in V_q}\sum_{m=1}^M
	\zeta_m^{\mathcal{A},\mathcal{B}}
	\left[
	d^v_q\binom{N_q-2}{\mathbf{s}^{[i]}_q-\mathbf{e}_{\mathcal{A}}-\mathbf{e}_m}
	\binom{N_p}{\mathbf{s}^{[i]}_p}
	+d^v_p\binom{N_q-1}{\mathbf{s}^{[i]}_q-\mathbf{e}_{\mathcal{A}}}
	\binom{N_p-1}{\mathbf{s}^{[i]}_p-\mathbf{e}_m}
	\right]\\
	&=\sum_{v\in V_q}
	\sum_{m=1}^M\zeta_m^{\mathcal{A},\mathcal{B}}\left[d_q^v
	\frac{\mathbf{s}^{[i]}_{\mathcal{A},q}\left(\mathbf{s}^{[i]}_{m,q}-\delta_{\mathcal{A},\mathcal{W}_m}\right)}{N_q(N_q-1)}
	+d_{p}^v
	\frac{\mathbf{s}^{[i]}_{\mathcal{A},q}\mathbf{s}^{[i]}_{m,p}}{N_qN_{p}}\right],
\end{align*}
where recall that $\delta_{\mathcal{A},\mathcal{B}}$ is the Kronecker delta function.
Using these simplifications, the expression for $\mathbf{q}_{ij}$ becomes
\begin{align*}
	\mathbf{q}_{ij}=\mathbf{s}_{\mathcal{A},q}^{[i]}\left\{\zeta_0^{\mathcal{A},\mathcal{B}}+\frac{1}{N_q}
	\sum_{v\in V_q}
	\sum_{m=1}^M\zeta_m^{\mathcal{A},\mathcal{B}}\left[d_q^v
	\frac{\left(\mathbf{s}^{[i]}_{m,q}-\delta_{\mathcal{A},\mathcal{W}_m}\right)}{(N_q-1)}
	+d_{p}^v
	\frac{\mathbf{s}^{[i]}_{m,p}}{N_{p}}\right]\right\}.
\end{align*}
Note that this formula enables us to compute $\mathbf{q}$ without direct reference to $\mathbf{Q}$.

\section{The lumped generator for arbitrary finite vertex-partitions}
\label{sec:Ppartitions}
We can now generalise relatively easily to arbitrary vertex partitions. We consider a vertex partition $\Pi=\{V_1,\dots,V_P\}$ with $V_p\cap V_q=\emptyset$ for $p\ne q$ and $\cup_{p}V_p=V$. We then have $\mathbf{s}\in\mathbb{Z}^{M\times P}_{\ge0}$, whose $m,p$th entry, $\mathbf{s}_{m,p}$, is the number of vertices in vertex-state $\mathcal{W}_m$ in the vertex-partition cell $V_p$. The total number of microstates is
$$M^N=\sum_{\mathbf{s}}{N \choose \mathbf{s}}:=\sum_{\mathbf{s}_1|N_1}\sum_{\mathbf{s}_2|N_2}\cdots\sum_{\mathbf{s}_P|N_P}\prod_{p=1}^P{N_p\choose \mathbf{s}_p},$$
which can be obtained from application of the multinomial theorem and interchanging the product of sums with a sum of products (see the Supplementary Information for further details).
The number of lumped states is
\begin{equation}
n=\prod_{p=1}^{P}{N_p+M-1\choose N_p}.
\label{eq:lumpedn}
\end{equation}
Again we assume, without loss of generality, that a transition from the lumped state $\mathbf{s}^{[i]}$ to the lumped state $\mathbf{s}^{[j]}\ne\mathbf{s}^{[i]}$ corresponds to a vertex in vertex-partition cell $V_q$ transitioning from vertex-state $\mathcal{A}$ to $\mathcal{B}$.

Our initial goal is to write \eqref{eq:qij} in terms of a sum over vertices and possible neighbourhood counts. In order to prove this, we must first consider some properties of neighbourhood counts.
Since we sum over neighbourhood counts, we need to be clear about which correspond to arrangements of vertex-states in microstates in the correct lumped partition cell, and that only these contribute non-zero values to the summation. To this end, we say a neighbourhood count $\mathbf{n}^v$ is \emph{realisable} if there is at least one microstate in which the number of neighbours of vertex $v$ in each vertex-partition corresponds to $\mathbf{n}^v$.
\begin{definition}
A neighbourhood count $\mathbf{n}^v$ is \emph{realisable} in $\mathcal{S}_i\subset\mathcal{S}$ if there is a microstate $S^{[k]}\in\mathcal{S}_i$ in which the number of neighbours of vertex $v$ in vertex-state $\mathcal{W}_m\in\mathcal{W}$ and vertex-partition cell $V_p\in\Pi_V$ is $\mathbf{n}^v_{m,p}$.
\end{definition}
There is a simple condition that ensures a neighbourhood count is realisable.
\begin{lemma}
For $v\in V$, a neighbourhood count $\mathbf{n}^v$ is realisable in $\mathcal{S}_i\subset\mathcal{S}$ if and only if $\mathbf{n}^v_{m,p}\le\mathbf{s}^{[i]}_{m,p}$ for all $m,p$.    
\label{lem:nvrealisableiflts}
\end{lemma}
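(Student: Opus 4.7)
The plan is to prove the two directions of the biconditional separately. Let $v \in V_q$ for some $q$, and assume throughout that $G$ is simple so that $v$ is never its own neighbour.

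The \emph{only if} direction is essentially immediate. Suppose $\mathbf{n}^v$ is realisable in $\mathcal{S}_i$, witnessed by some microstate $S^{[k]} \in \mathcal{S}_i$. For each $m$ and $p$, the set of neighbours of $v$ lying in $V_p$ and in vertex-state $\mathcal{W}_m$ under $S^{[k]}$ is contained in the set of \emph{all} vertices in $V_p$ that are in vertex-state $\mathcal{W}_m$ under $S^{[k]}$. By definition of $\mathcal{S}_i$, the latter set has cardinality $\mathbf{s}^{[i]}_{m,p}$, giving $\mathbf{n}^v_{m,p} \le \mathbf{s}^{[i]}_{m,p}$.

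The \emph{if} direction is the substantive part: given the inequalities $\mathbf{n}^v_{m,p} \le \mathbf{s}^{[i]}_{m,p}$ for every $m$ and $p$, I would explicitly construct a microstate $S^{[k]} \in \mathcal{S}_i$ realising $\mathbf{n}^v$. For each $p \ne q$, I would pick any assignment of vertex-states to the $d^v_p$ neighbours of $v$ in $V_p$ so that $\mathbf{n}^v_{m,p}$ of them are in $\mathcal{W}_m$ (possible since $\sum_m \mathbf{n}^v_{m,p} = d^v_p$), and then distribute vertex-states among the remaining $N_p - d^v_p$ non-neighbours in $V_p$ so that exactly $\mathbf{s}^{[i]}_{m,p} - \mathbf{n}^v_{m,p}$ of them receive state $\mathcal{W}_m$. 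The hypothesis ensures each of these counts is non-negative, and they sum to $N_p - d^v_p$, so such a distribution exists.

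The main obstacle is handling the cell $V_q$ containing $v$ itself, since $v$ contributes to $\mathbf{s}^{[i]}_{\cdot,q}$ but not to $\mathbf{n}^v_{\cdot,q}$. I would first choose a vertex-state $\mathcal{A}$ for $v$ satisfying $\mathbf{n}^v_{\mathcal{A},q} < \mathbf{s}^{[i]}_{\mathcal{A},q}$, and then proceed as in the previous paragraph, assigning the $d^v_q$ neighbours of $v$ in $V_q$ so that $\mathbf{n}^v_{m,q}$ of them are in $\mathcal{W}_m$, and the remaining $N_q - 1 - d^v_q$ non-neighbours so that the overall column count comes out to $\mathbf{s}^{[i]}_{m,q}$. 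The required ``leftover'' count for each $m$ is $\mathbf{s}^{[i]}_{m,q} - \mathbf{n}^v_{m,q} - \delta_{m,\mathcal{A}}$, which is non-negative for $m \ne \mathcal{A}$ by hypothesis and for $m = \mathcal{A}$ precisely by the strict inequality defining our choice of $\mathcal{A}$. To finish, I need to justify that such an $\mathcal{A}$ exists: if no $\mathcal{A}$ satisfied the strict inequality, then $\mathbf{n}^v_{m,q} = \mathbf{s}^{[i]}_{m,q}$ for every $m$, yielding $d^v_q = \sum_m \mathbf{n}^v_{m,q} = \sum_m \mathbf{s}^{[i]}_{m,q} = N_q$, which contradicts $d^v_q \le N_q - 1$ (since $v \in V_q$ and $G$ has no self-loops). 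Hence such an $\mathcal{A}$ exists, the construction goes through, and the resulting microstate lies in $\mathcal{S}_i$ and realises $\mathbf{n}^v$.
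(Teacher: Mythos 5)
Your proof is correct and follows essentially the same route as the paper's: the forward direction by the same containment argument, and the converse by explicitly distributing vertex-states to match $\mathbf{n}^v_{m,p}$ on the neighbours and $\mathbf{s}^{[i]}_{m,p}-\mathbf{n}^v_{m,p}$ on the rest. The only difference is that you treat the cell $V_q$ containing $v$ with extra care (choosing a state $\mathcal{A}$ for $v$ with $\mathbf{n}^v_{\mathcal{A},q}<\mathbf{s}^{[i]}_{\mathcal{A},q}$, which exists since $d^v_q\le N_q-1$); this is sound, though the paper sidesteps it by simply counting $v$ among the ``remaining vertices'' of $V_q$, which already forces $v$'s state to be consistent.
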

\begin{proof}
Suppose that $\mathbf{n}^v$ is realisable in   $\mathcal{S}_i\subset\mathcal{S}$, then there is a microstate $S^{[k]}\in\mathcal{S}_i$ such that the number of neighbours of vertex $v$ in vertex-state $\mathcal{W}_m\in\mathcal{W}$ and vertex-partition cell $V_p\in\Pi_V$ is $\mathbf{n}^v_{m,p}$. Since $\mathbf{s}^{[i]}_{m,p}$ is the number of vertices in vertex-state $\mathcal{W}_m\in\mathcal{W}$ and vertex-partition cell $V_p\in\Pi_V$, we must have $\mathbf{n}^v_{m,p}\le\mathbf{s}^{[i]}_{m,p}$ for all $m,p$.

Suppose that $\mathbf{n}^v_{m,p}\le\mathbf{s}^{[i]}_{m,p}$ for all $m,p$. Then for each partition $p$, we can assign a vertex-state to each of the neighbours of $v$ such that the total number of neighbours of $v$ in vertex-state $\mathcal{W}_m\in\mathcal{W}$ and vertex-partition cell $V_p\in\Pi_V$ is $\mathbf{n}^v_{m,p}$. Similarly we can assign vertex-states to each of the remaining vertices (including $v$) in each vertex-partition cell such that the number of them in vertex-state $\mathcal{W}_m\in\mathcal{W}$ and vertex-partition cell $V_p\in\Pi_V$ is $\mathbf{s}^{[i]}_{m,p}-\mathbf{n}^v_{m,p}\ge0$. Thus the total number of vertices in vertex-state $\mathcal{W}_m\in\mathcal{W}$ and vertex-partition cell $V_p\in\Pi_V$ is $\mathbf{s}^{[i]}_{m,p}$, and hence this assignment of vertex-states corresponds to a microstate in $\mathcal{S}_i$.
\end{proof}

Importantly, when we sum over neighbourhood counts, only those that are realisable contribute to the sum.
\begin{corollary}
For $v\in V$, a neighbourhood count $\mathbf{n}^v$ is realisable in $\mathcal{S}_i\subset\mathcal{S}$ if and only if
\begin{equation}
\prod_{p=1}^{P}A(\mathbf{s}^{[i]}_p,\mathbf{n}^v_p)>0.
\label{eq:prodApositive}
\end{equation}
\label{col:nvrealisablepositiveprod}
\end{corollary}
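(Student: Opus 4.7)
The plan is to deduce the corollary directly from Lemma \ref{lem:nvrealisableiflts} by translating the elementwise inequality $\mathbf{n}^v_{m,p}\le\mathbf{s}^{[i]}_{m,p}$ into the positivity of the multinomial coefficients appearing in the definition of $A$. Recall that by definition
\[
A(\mathbf{s}^{[i]}_p,\mathbf{n}^v_p)=\binom{d^v_p}{\mathbf{n}^v_{p}}\binom{N_p-d^v_p}{\mathbf{s}^{[i]}_p-\mathbf{n}^v_p},
\]
and a multinomial coefficient with a vector in the denominator is positive exactly when every entry of that vector is non-negative and the entries sum to the scalar on top (by the convention stated just after \eqref{eq:arrangements}). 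Everything else in the argument is bookkeeping with these two constraints.

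First I would dispatch the left-hand factor $\binom{d^v_p}{\mathbf{n}^v_p}$. Since $\mathbf{n}^v$ is a neighbourhood count, its entries are by definition non-negative integers and the $p$-th column sums to $d^v_p$, so this multinomial is automatically positive for every $p$. Hence the sign of the product in \eqref{eq:prodApositive} is entirely controlled by the right-hand factors $\binom{N_p-d^v_p}{\mathbf{s}^{[i]}_p-\mathbf{n}^v_p}$.

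Next I would analyse these right-hand factors. The entries of $\mathbf{s}^{[i]}_p-\mathbf{n}^v_p$ sum to $N_p-d^v_p$ automatically, because $\mathbf{s}^{[i]}_p$ sums to $N_p$ (Definition \ref{def:vpppms}) and $\mathbf{n}^v_p$ sums to $d^v_p$. Therefore $\binom{N_p-d^v_p}{\mathbf{s}^{[i]}_p-\mathbf{n}^v_p}>0$ if and only if every entry of $\mathbf{s}^{[i]}_p-\mathbf{n}^v_p$ is non-negative, i.e.\ $\mathbf{n}^v_{m,p}\le\mathbf{s}^{[i]}_{m,p}$ for every $m$. Taking the product over $p$, the inequality in \eqref{eq:prodApositive} holds if and only if $\mathbf{n}^v_{m,p}\le\mathbf{s}^{[i]}_{m,p}$ for all $m$ and all $p$, which by Lemma \ref{lem:nvrealisableiflts} is equivalent to realisability of $\mathbf{n}^v$ in $\mathcal{S}_i$.

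There is no real obstacle here: the corollary is essentially a repackaging of Lemma \ref{lem:nvrealisableiflts} in combinatorial language. The only mild care needed is to state explicitly the convention that multinomial coefficients vanish when any of the denominator entries is negative (or when the sum of the denominator entries differs from the numerator), so that ``positive'' is the correct dichotomy; this is exactly the convention already flagged in the paragraph following \eqref{eq:arrangements}.
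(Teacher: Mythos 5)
Your proposal is correct and follows essentially the same route as the paper: both directions reduce to Lemma~\ref{lem:nvrealisableiflts} via the observation that the multinomial coefficients in $A(\mathbf{s}^{[i]}_p,\mathbf{n}^v_p)$ are positive exactly when $\mathbf{n}^v_{m,p}\le\mathbf{s}^{[i]}_{m,p}$ for all $m,p$. Your write-up is somewhat more explicit than the paper's (checking the factor $\binom{d^v_p}{\mathbf{n}^v_p}$ separately and verifying the sum condition), but the argument is the same.
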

\begin{proof}
Suppose that for $v\in V$, a neighbourhood count $\mathbf{n}^v$ is realisable in $\mathcal{S}_i\subset\mathcal{S}$, then by Lemma~\ref{lem:nvrealisableiflts} $\mathbf{s}^{[i]}_{m,p}-\mathbf{n}^v_{m,p}\ge0$ for all $m,p$ and hence \eqref{eq:prodApositive} is true.
Conversely, if \eqref{eq:prodApositive} is true then each term in the product is positive and hence $\mathbf{n}^v_{m,p}\le\mathbf{s}^{[i]}_{m,p}$ for all $m,p$. Then by Lemma~\ref{lem:nvrealisableiflts} $\mathbf{n}^v$ is realisable in $\mathcal{S}_i\subset\mathcal{S}$.
\end{proof}

We may now prove that lumped rate \eqref{eq:qij} can be written in terms of a sum over vertices and possible neighbourhood counts. In the following lemma, we use $\mathbf{1}_P$ to denote a column vector with $P$ entries, each 1.
\begin{lemma}
Let $\mathcal{S}$ be the microstate-space of a homogeneous SVT with affine VSTM on a network with vertex set $V$ and let $\Pi_V=\{V_1,V_2,\dots,V_{P}\}$ be a partition of $V$. Suppose that the corresponding vertex-partition lumping macrostate-space is $(\mathbf{s}^{[1]},\mathbf{s}^{[2]},\dots,\mathbf{s}^{[n]})$ (Definition~\ref{def:vpppms}) and the partition of microstates is $\Pi_{\mathcal{S}}=\{\mathcal{S}_1,\mathcal{S}_2,\dots,\mathcal{S}_n\}$ (Definition~\ref{def:vpal}). If a transition from $\mathbf{s}^{[i]}$ to $\mathbf{s}^{[j]}$ corresponds to a vertex $v\in V_q$ changing from vertex-state $\mathcal{A}$ to vertex-state $\mathcal{B}$, then
\begin{equation}
\sum_{S^{[k]}\in \mathcal{S}_i}\sum_{S^{[l]}\in \mathcal{S}_j}\mathbf{Q}_{kl}=\sum_{v\in V_q}\sum_{\mathbf{n}^v\mid d^v}\mathbf{R}_{\mathcal{A},\mathcal{B}}(\mathbf{n}^v\mathbf{1}_P)
\prod_{p=1}^{P}A(\mathbf{s}^{[i]}_p-\delta_{p,q}\mathbf{e}_{\mathcal{A}},\mathbf{n}^v_p)
\label{eq:count-eq}
\end{equation}
\label{lm:counting}
\end{lemma}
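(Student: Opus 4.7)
The plan is to re-express the double sum on the left of \eqref{eq:count-eq} by reindexing over transition vertices and neighbourhood configurations, rather than over raw microstate pairs. The key observation is that the SVT structure forces all non-zero contributions to correspond to single-vertex changes, and the rate for such a change depends only on the transition vertex's local neighbourhood composition.

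\textbf{Step 1 (Restrict to relevant transition pairs).} First I would use the definition of $\mathbf{Q}$ to eliminate all zero terms: $\mathbf{Q}_{kl}$ is non-zero only if $S^{[k]}\stackrel{u}{\sim}S^{[l]}$ for some transition vertex $u$. If in addition $S^{[k]}\in\mathcal{S}_i$ and $S^{[l]}\in\mathcal{S}_j$, then comparing the vertex-state counts in each partition cell forces $u\in V_q$, $S^{[k]}(u)=\mathcal{A}$, and $S^{[l]}(u)=\mathcal{B}$, since $\mathbf{s}^{[j]}$ differs from $\mathbf{s}^{[i]}$ only by moving one vertex in $V_q$ from $\mathcal{A}$ to $\mathcal{B}$. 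Thus the double sum collapses to a sum over $v\in V_q$ and over those microstates $S^{[k]}\in\mathcal{S}_i$ with $S^{[k]}(v)=\mathcal{A}$, with $S^{[l]}$ determined uniquely by flipping $v$ from $\mathcal{A}$ to $\mathcal{B}$. I would verify that this flip indeed lands in $\mathcal{S}_j$, so the correspondence between $S^{[k]}$ (with $v$ in state $\mathcal{A}$) and such pairs is a bijection.

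\textbf{Step 2 (Factor the rate through the neighbourhood count).} Since the resulting $\mathbf{Q}_{kl}=\mathbf{R}_{\mathcal{A},\mathcal{B}}(n^{[k]}(v))$ depends on $S^{[k]}$ only through the neighbourhood count of $v$, I would further partition the inner sum by the matrix neighbourhood count $\mathbf{n}^v\in\mathbb{Z}_{\ge 0}^{M\times P}$ with column sums $d_p^v$. The classical neighbourhood count $n^{[k]}(v)$ appearing in $\mathbf{R}$ equals $\mathbf{n}^v\mathbf{1}_P$, because summing the rows of $\mathbf{n}^v$ across cells produces the total number of neighbours in each vertex-state. So the sum becomes
\[
\sum_{v\in V_q}\sum_{\mathbf{n}^v\mid d^v}\mathbf{R}_{\mathcal{A},\mathcal{B}}(\mathbf{n}^v\mathbf{1}_P)\,\bigl|\{S^{[k]}\in\mathcal{S}_i:S^{[k]}(v)=\mathcal{A},\text{ neighbourhood of }v\text{ is }\mathbf{n}^v\}\bigr|.
\]

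\textbf{Step 3 (Combinatorial counting cell by cell).} I would then compute the cardinality appearing above by choosing vertex-state assignments independently in each cell $V_p$. In each cell $p\ne q$, I distribute vertex-states to the $d_p^v$ neighbours of $v$ in that cell according to $\mathbf{n}^v_p$, and to the remaining $N_p-d_p^v$ non-neighbours according to $\mathbf{s}^{[i]}_p-\mathbf{n}^v_p$; this yields exactly $A(\mathbf{s}^{[i]}_p,\mathbf{n}^v_p)$ arrangements. For the cell $V_q$ containing $v$, the vertex $v$ itself is fixed in state $\mathcal{A}$, so the remaining $N_q-1$ vertices carry counts $\mathbf{s}^{[i]}_q-\mathbf{e}_{\mathcal{A}}$, of which the $d_q^v$ neighbours of $v$ follow $\mathbf{n}^v_q$; this gives $A(\mathbf{s}^{[i]}_q-\mathbf{e}_{\mathcal{A}},\mathbf{n}^v_q)$ arrangements, using the convention that the upper index of the multinomials in $A$ is read off from the lower entries. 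The two cases can be written uniformly using the Kronecker delta as $A(\mathbf{s}^{[i]}_p-\delta_{p,q}\mathbf{e}_{\mathcal{A}},\mathbf{n}^v_p)$, and since cells are disjoint these counts multiply, giving the product in \eqref{eq:count-eq}.

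\textbf{Step 4 (Enlarging the index set).} Finally, Lemma~\ref{lem:nvrealisableiflts} together with Corollary~\ref{col:nvrealisablepositiveprod} ensure that any non-realisable $\mathbf{n}^v$ contributes zero to the product of $A$'s, so extending the sum to all $\mathbf{n}^v\mid d^v$ (rather than restricting to realisable ones) does not change the value. Combining Steps 1--4 produces the right-hand side of \eqref{eq:count-eq}. The main obstacle I anticipate is the bookkeeping in Step 3, specifically verifying that the $\delta_{p,q}\mathbf{e}_{\mathcal{A}}$ correction arises naturally from the fact that $v$ is fixed in state $\mathcal{A}$ inside its own cell, and making sure the convention that the top entries of $A$ are recovered from the lower entries aligns with the reduced counts $N_q-1$ and $\mathbf{s}^{[i]}_q-\mathbf{e}_{\mathcal{A}}$; the rest is essentially a careful application of the bijection in Step 1 and independence of cells.
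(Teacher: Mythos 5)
Your proposal is correct and follows essentially the same route as the paper's proof: both reindex the sum over transition pairs by the transition vertex $v\in V_q$ and its neighbourhood count $\mathbf{n}^v$, compute the multiplicity of each $(v,\mathbf{n}^v)$ pair cell-by-cell as $\prod_{p}A(\mathbf{s}^{[i]}_p-\delta_{p,q}\mathbf{e}_{\mathcal{A}},\mathbf{n}^v_p)$, and invoke Corollary~\ref{col:nvrealisablepositiveprod} to dispose of non-realisable counts. Your Steps 1 and 4 make explicit a couple of bookkeeping points (the flip landing in $\mathcal{S}_j$, and the harmlessness of summing over all $\mathbf{n}^v\mid d^v$) that the paper treats more tersely, but the argument is the same.
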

\begin{proof}
We construct a surjective map from transition pairs on the left of \eqref{eq:count-eq} to feasible neighbourhood counts $\mathbf{n}^v$ on the right of \eqref{eq:count-eq}, and show that the multinomial terms on the right account for the many-to-one multiplicity of the mapping.
For $S^{[k]}\in \mathcal{S}_i$ and $S^{[l]}\in \mathcal{S}_j$, $\mathbf{Q}_{kl}$ can only be non-zero if $S^{[k]}\stackrel{v}{\sim} S^{[l]}$, which identifies a unique vertex $v$ and by assumption $v\in V_q$. Then $\mathbf{Q}_{kl}=\mathbf{R}_{S^{[k]}(v),S^{[l]}(v)}(n^{[k]}(v))$. Furthermore, the microstate $S^{[k]}$ corresponds to a unique neighbourhood count $\mathbf{n}^v$, which is evidently realisable. 
Since $n^{[k]}(v)$ is derived from $S^{[k]}$, we have $n^{[k]}(v)=\mathbf{n}^v\mathbf{1}_P$.
It is assumed that a transition from $\mathbf{s}^{[i]}$ to $\mathbf{s}^{[j]}$ corresponds to a vertex $v\in V_q$ changing from vertex-state $\mathcal{A}$ to vertex-state $\mathcal{B}$, thus we have $S^{[k]}(v)=\mathcal{A}$ and $S^{[l]}(v)=\mathcal{B}$. 
Consequently, each transition pair in the summation on the left of \eqref{eq:count-eq} corresponds to a unique pair $v\in V_q$ and $\mathbf{n}^v|d^v$ in the summation on the right with rate $\mathbf{R}_{\mathcal{A},\mathcal{B}}(\mathbf{n}^v\mathbf{1}_P)$. Since $\mathbf{n}^v$ is realisable and the vertex-state of $v\in V_q$ is $\mathcal{A}$, $\prod^P_{p=1}A(\mathbf{s}^{[i]}_p-\delta_{p,q}\mathbf{e}_{\mathcal{A}},\mathbf{n}^v_p)>0$.

For a given $v\in V_q$ and neighbourhood count $\mathbf{n}_p^v$, when $p\ne q$ there are 
\begin{equation*}
A(\mathbf{s}^{[i]}_p,\mathbf{n}^v_p)
=\binom{d^v_p}{\mathbf{n}^v_p}\binom{N_p-d^v_p}{\mathbf{s}^{[i]}_p-\mathbf{n}^v_p} 
\end{equation*}
ways of arranging the vertex-states of the neighbours of $v$ in vertex-partition cell $p$ according to $\mathbf{n}^v_p$, as well as the vertex-states of the other vertices in vertex-partition cell $p$ according to $\mathbf{s}^{[i]}_p$. 
Similarly, for the case $p=q$ we have 
\begin{equation*}
A(\mathbf{s}^{[i]}_p-\mathbf{e}_{\mathcal{A}},\mathbf{n}^v_p)
=\binom{d^v_p}{\mathbf{n}^v_p}\binom{N_p-1-d^v_p}{\mathbf{s}^{[i]}_p-\mathbf{e}_{\mathcal{A}}-\mathbf{n}^v_p}.
\end{equation*}
Thus using Corollary~\ref{col:nvrealisablepositiveprod}, there are 
\begin{equation}
\prod_{p=1}^{P}A(\mathbf{s}^{[i]}_p-\delta_{p,q}\mathbf{e}_{\mathcal{A}},\mathbf{n}^v_p)
\label{eq:dvncombinations}
\end{equation}
microstates in $\mathcal{S}_i$ in which the vertex-state of $v$ is $\mathcal{A}$ and its neighbourhood count is $\mathbf{n}^v$. These microstates form transition pairs with the corresponding states in $\mathcal{S}_j$ in which the vertex-state of $v$ is $\mathcal{B}$.
Thus the number of transition pairs on the left of \eqref{eq:count-eq} associated with each $v\in V_q$ and realisable neighbourhood count $\mathbf{n}^v$ in the summation on the right is given by \eqref{eq:dvncombinations}.
\end{proof}

We saw in Section~\ref{sec:simplifiedgen} that we needed a generalisation of the Vandermonde identity. This can be stated as
\begin{lemma}
Let $\mathbf{s}$ be a lumped state and $\mathbf{n}^v$ a neighbourhood count of a vertex $v\in V$, then
\begin{equation}
\binom{N}{\mathbf{s}}=
\sum_{\mathbf{n}^v\mid d^v}\prod_{p=1}^PA(\mathbf{s}_p,\mathbf{n}^v_p).
\end{equation}
\label{lem:vandermonde}
\end{lemma}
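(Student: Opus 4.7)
My plan is to decompose the sum over neighbourhood counts by vertex-partition cell and thereby reduce the identity to a standard multivariate Vandermonde identity applied independently in each cell. A neighbourhood count $\mathbf{n}^v \in \mathbb{Z}^{M \times P}_{\ge 0}$ satisfying $\sum_m \mathbf{n}^v_{m,p} = d^v_p$ is uniquely specified by choosing each column $\mathbf{n}^v_p$ independently from the set of $M$-tuples of non-negative integers summing to $d^v_p$. Since $\binom{N}{\mathbf{s}} = \prod_{p=1}^P \binom{N_p}{\mathbf{s}_p}$ by the convention introduced in Section~\ref{sec:twocell}, and since the summand $\prod_{p=1}^P A(\mathbf{s}_p,\mathbf{n}^v_p)$ factors across columns, the sum-product property alluded to in Section~\ref{sec:simplifiedgen} yields
$$
\sum_{\mathbf{n}^v\mid d^v} \prod_{p=1}^P A(\mathbf{s}_p,\mathbf{n}^v_p) \;=\; \prod_{p=1}^P \sum_{\mathbf{n}^v_p\mid d^v_p} A(\mathbf{s}_p,\mathbf{n}^v_p).
$$
Hence it suffices to prove, for each $p$ separately, the one-cell identity
$$
\binom{N_p}{\mathbf{s}_p} \;=\; \sum_{\mathbf{n}^v_p\mid d^v_p} \binom{d^v_p}{\mathbf{n}^v_p}\binom{N_p-d^v_p}{\mathbf{s}_p-\mathbf{n}^v_p}.
$$

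I would then establish this one-cell identity by a double-counting argument. The left-hand side counts assignments of vertex-states in $\mathcal{W}$ to the $N_p$ vertices of $V_p$ such that exactly $\mathbf{s}_{m,p}$ vertices receive vertex-state $\mathcal{W}_m$ for each $m$. Split $V_p$ into the $d^v_p$ neighbours of $v$ lying in $V_p$ and the $N_p - d^v_p$ remaining vertices of $V_p$ (noting that when $v\in V_p$ the vertex $v$ itself falls among the non-neighbours, since the graph is simple). Conditioning on the vector $\mathbf{n}^v_p$ that records how the $\mathcal{W}_m$-counts split between the neighbours of $v$ and the rest, there are $\binom{d^v_p}{\mathbf{n}^v_p}$ ways to arrange vertex-states on the neighbours and $\binom{N_p-d^v_p}{\mathbf{s}_p-\mathbf{n}^v_p}$ ways to arrange them on the non-neighbours, matching the summand. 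The standard multinomial convention that negative denominator entries give zero ensures that non-realisable $\mathbf{n}^v_p$ contribute nothing, so summing over $\mathbf{n}^v_p\mid d^v_p$ recovers the total. Multiplying these one-cell identities across $p$ then gives the lemma.

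The main obstacle I anticipate is purely notational bookkeeping, namely verifying that the column-wise factorisation of $\sum_{\mathbf{n}^v\mid d^v}$ respects the column-sum constraints on $\mathbf{n}^v$, and that the split of $V_p$ into neighbours and non-neighbours of $v$ is handled consistently in the cell $V_q$ containing $v$. To avoid this bookkeeping entirely one could give a purely algebraic proof by equating the coefficient of $\prod_m x_m^{\mathbf{s}_{m,p}}$ on the two sides of the polynomial identity $(x_1+\cdots+x_M)^{N_p} = (x_1+\cdots+x_M)^{d^v_p}(x_1+\cdots+x_M)^{N_p-d^v_p}$ using the multinomial theorem; this yields the one-cell identity immediately without needing to partition $V_p$ explicitly, and it also serves as the natural proof of the sum-product property used in the first step.
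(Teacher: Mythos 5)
Your proposal is correct and follows essentially the same route the paper takes in its Supplementary Materials: factor the sum over neighbourhood counts column-by-column via the sum-product property, then apply a multinomial Vandermonde convolution (equivalently, compare coefficients in $(x_1+\cdots+x_M)^{N_p}=(x_1+\cdots+x_M)^{d^v_p}(x_1+\cdots+x_M)^{N_p-d^v_p}$) in each vertex-partition cell. The zero-convention for multinomials with negative entries handles the non-realisable counts exactly as you describe, so nothing is missing.
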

A detailed proof can be found in the Supplementary Materials.

We now present the main result of the paper.

\begin{theorem}
Let $\mathcal{S}$ be the state-space of a homogeneous SVT with affine VSTM on a network with vertex set $V$ and let $\Pi_V=\{\Pi_1,\Pi_2,\dots,\Pi_{P}\}$ be a partition of $V$. 
Suppose that $\mathbf{q}=\mathbf{DQC}$ is the lumped infinitesimal generator corresponding to the vertex-partition lumping with macrostate-space
$(\mathbf{s}^{[1]},\mathbf{s}^{[2]},\dots,\mathbf{s}^{[n]})$. 
If a transition from $\mathbf{s}^{[i]}$ to $\mathbf{s}^{[j]}$ corresponds to a vertex $v\in V_q$ changing from vertex-state $\mathcal{A}$ to vertex-state $\mathcal{B}$, then
\begin{align}
\mathbf{q}_{ij}=\mathbf{s}_{\mathcal{A},q}^{[i]}\left\{
\zeta_0^{\mathcal{A},\mathcal{B}}+\frac{1}{N_q}\sum_{r=1}^P
\left(\frac{\sum_{v\in V_q}d^v_r}{N_r-\delta_{q,r}}\right)
\left[\sum_{m=1}^{M}\zeta_m^{\mathcal{A},\mathcal{B}}
\left(\mathbf{s}_{m,r}^{[i]}-\delta_{\mathcal{A},\mathcal{W}_m}\delta_{q,r}\right)
\right]\right\}.\label{eq:qijgeneral}
\end{align}
\label{thm:genrates}
\end{theorem}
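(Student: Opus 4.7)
My proof will follow exactly the pattern of Section~\ref{sec:simplifiedgen}, lifting the two-cell calculation to $P$ cells using the tools Lemma~\ref{lm:counting} (rewriting microstate sums as sums over vertices and neighbourhood counts) and Lemma~\ref{lem:vandermonde} (the generalised Vandermonde identity). Starting from $\mathbf{q}_{ij}=\frac{1}{|\mathcal{S}_i|}\sum_{S^{[k]}\in\mathcal{S}_i}\sum_{S^{[l]}\in\mathcal{S}_j}\mathbf{Q}_{kl}$ and noting that $|\mathcal{S}_i|=\binom{N}{\mathbf{s}^{[i]}}$, Lemma~\ref{lm:counting} lets me write
\begin{equation*}
\mathbf{q}_{ij}=\frac{1}{\binom{N}{\mathbf{s}^{[i]}}}\sum_{v\in V_q}\sum_{\mathbf{n}^v\mid d^v}\mathbf{R}_{\mathcal{A},\mathcal{B}}(\mathbf{n}^v\mathbf{1}_P)\prod_{p=1}^P A(\mathbf{s}^{[i]}_p-\delta_{p,q}\mathbf{e}_{\mathcal{A}},\mathbf{n}^v_p).
\end{equation*}
Substituting the affine VSTM $\mathbf{R}_{\mathcal{A},\mathcal{B}}(\mathbf{n}^v\mathbf{1}_P)=\zeta_0^{\mathcal{A},\mathcal{B}}+\sum_{m,r}\zeta_m^{\mathcal{A},\mathcal{B}}\mathbf{n}^v_{m,r}$ splits the computation into a ``constant'' piece and a ``linear'' piece, which I handle separately.

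\textbf{Constant piece.} For the $\zeta_0^{\mathcal{A},\mathcal{B}}$ contribution, the only $\mathbf{n}^v$-dependence is in the product of $A$'s, and by Lemma~\ref{lem:vandermonde} applied to the shifted lumped state (where the $q$th column has one fewer $\mathcal{A}$ vertex), we obtain
\begin{equation*}
\sum_{\mathbf{n}^v\mid d^v}\prod_{p=1}^P A(\mathbf{s}^{[i]}_p-\delta_{p,q}\mathbf{e}_{\mathcal{A}},\mathbf{n}^v_p)=\binom{N-1}{\mathbf{s}^{[i]}-\mathbf{e}_{\mathcal{A},q}},
\end{equation*}
where $\mathbf{e}_{\mathcal{A},q}$ is the $M\times P$ matrix with a single 1 in row $\mathcal{A}$, column $q$. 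Dividing by $\binom{N}{\mathbf{s}^{[i]}}$ and summing over $v\in V_q$ (which contributes the factor $N_q$) yields $\zeta_0^{\mathcal{A},\mathcal{B}}\mathbf{s}^{[i]}_{\mathcal{A},q}$, matching the first term of~\eqref{eq:qijgeneral}.

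\textbf{Linear piece.} For the $\zeta_m^{\mathcal{A},\mathcal{B}}\mathbf{n}^v_{m,r}$ terms, I use the column-wise identity
\begin{equation*}
\mathbf{n}^v_{m,r}A(\mathbf{s}^{[i]}_r-\delta_{r,q}\mathbf{e}_{\mathcal{A}},\mathbf{n}^v_r)=d^v_r\,A(\mathbf{s}^{[i]}_r-\delta_{r,q}\mathbf{e}_{\mathcal{A}}-\mathbf{e}_m,\mathbf{n}^v_r-\mathbf{e}_m)
\end{equation*}
(exactly as derived in Section~\ref{sec:simplifiedgen}; it comes from absorbing the factor $\mathbf{n}^v_{m,r}$ into the multinomial $\binom{d^v_r}{\mathbf{n}^v_r}$ and relabelling). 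After this absorption, the sum over $\mathbf{n}^v$ collapses again via Lemma~\ref{lem:vandermonde} to a single multinomial, which simplifies when divided by $\binom{N}{\mathbf{s}^{[i]}}$ to the ratio $\mathbf{s}^{[i]}_{\mathcal{A},q}(\mathbf{s}^{[i]}_{m,r}-\delta_{\mathcal{A},\mathcal{W}_m}\delta_{q,r})/[N_q(N_r-\delta_{q,r})]$. Summing this over $v\in V_q$ produces the factor $\sum_{v\in V_q}d^v_r$ and, once rearranged, gives precisely the bracketed expression in~\eqref{eq:qijgeneral}.

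\textbf{Main obstacle.} The combinatorics are all explicit analogues of the $P=2$ work, so I do not expect a genuine difficulty in the arithmetic. The one place requiring care is the bookkeeping of the $\mathcal{A}$-shift in the $q$th column: the term $\delta_{\mathcal{A},\mathcal{W}_m}\delta_{q,r}$ appearing in~\eqref{eq:qijgeneral} arises only when $r=q$ \emph{and} $m$ equals the index of $\mathcal{A}$, because then the shifted multinomial picks up an extra $-1$ in the count of $\mathcal{W}_m$-vertices. Writing this uniformly with a double Kronecker delta (rather than the case-by-case $r=q$ vs.\ $r\ne q$ split used in Section~\ref{sec:simplifiedgen}) is the only non-trivial packaging step, and simply requires tracking which occurrences of $\mathbf{e}_{\mathcal{A}}$ survive into the final numerator after cancellation with $\binom{N}{\mathbf{s}^{[i]}}$. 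Collecting the $\zeta_0$ and $\zeta_m$ contributions and factoring out $\mathbf{s}^{[i]}_{\mathcal{A},q}/N_q$ then yields~\eqref{eq:qijgeneral}.
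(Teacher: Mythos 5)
Your proposal is correct and follows essentially the same route as the paper's own proof: reduce $\mathbf{q}_{ij}$ via Lemma~\ref{lm:counting} to a sum over vertices and neighbourhood counts, split the affine VSTM into constant and linear pieces, absorb $\mathbf{n}^v_{m,r}$ into the multinomial, collapse the neighbourhood sums with Lemma~\ref{lem:vandermonde}, and cancel against $\binom{N}{\mathbf{s}^{[i]}}$. The only (cosmetic) difference is that you package the shifted multinomials with a matrix $\mathbf{e}_{\mathcal{A},q}$ and a uniform double Kronecker delta from the outset, whereas the paper writes the shifts column-by-column as $\delta_{p,q}\mathbf{e}_{\mathcal{A}}$ and $\delta_{p,r}\mathbf{e}_m$ inside the product; these are equivalent.
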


\begin{proof}
From \eqref{eq:qij} we have 
$$\mathbf{q}_{ij}=\frac{1}{\binom{N}{\mathbf{s}^{[i]}}}\sum_{S^{[k]}\in \Pi_i}\sum_{S^{[l]}\in \Pi_j}\mathbf{Q}_{km},$$
since the number of states in $\Pi_i$ is $|\Pi_i|=\binom{N}{\mathbf{s}^{[i]}}.$
Using Lemma~\ref{lm:counting} and \eqref{eq:affinevstm} it follows that
\begin{equation}
\mathbf{q}_{ij}=\frac{1}{{N\choose \mathbf{s}^{[i]}}}\sum_{v\in V_q}\sum_{\mathbf{n}^v|d^v}
\left(\zeta_0^{\mathcal{A},\mathcal{B}}+\sum_{m=1}^{M}\sum_{r=1}^{P}\zeta_m^{\mathcal{A},\mathcal{B}}\mathbf{n}^v_{m,r}\right)
\prod_{p=1}^PA(\mathbf{s}^{[i]}_p-\delta_{p,q}\mathbf{e}_{\mathcal{A}},\mathbf{n}^v_p)
\label{eq:Rijexpansion}
\end{equation}

We will deal with the $\zeta_0^{\mathcal{A},\mathcal{B}}$ and $\zeta_m^{\mathcal{A},\mathcal{B}}n_m$ terms separately. From Lemma~\ref{lem:vandermonde}, the sums around the constant term $\zeta_0^{\mathcal{A},\mathcal{B}}$ are
\begin{align}
    \frac{1}{\binom{N}{\mathbf{s}^{[i]}}}\sum_{v\in V_q}\sum_{\mathbf{n}^v\mid d^v}
\zeta_0^{\mathcal{A},\mathcal{B}}
\prod^P_{p=1}A(\mathbf{s}^{[i]}_p-\delta_{p,q}\mathbf{e}_{\mathcal{A}},\mathbf{n}^v_p)
&=\frac{1}{\binom{N}{\mathbf{s}^{[i]}}}\sum_{v\in V_q}\zeta_0^{\mathcal{A},\mathcal{B}}
\prod_{p=1}^P\binom{N_p-\delta_{p,q}}{\mathbf{s}^{[i]}_p-\delta_{p,q}\mathbf{e}_{\mathcal{A}}},\nonumber\\
&=\zeta_0^{\mathcal{A},\mathcal{B}}\mathbf{s}^{[i]}_{\mathcal{A},q}.\label{eq:c0}
\end{align}
In \eqref{eq:Rijexpansion} we are able to do the sums over $m$ and $r$ after we sum over $v$ and $\mathbf{n}^v|d^v$, since they do not determine $\mathbf{n}^v$. Thus we can focus on an individual term $\zeta_m^{\mathcal{A},\mathcal{B}}\mathbf{n}_{m,r}$; using Lemma~\ref{lem:vandermonde} we have
\begin{align*}
\sum_{\mathbf{n}^v\mid d^v}
\mathbf{n}^v_{m,r}
\prod^P_{p=1}A(\mathbf{s}^{[i]}_{p}-\delta_{p,q}\mathbf{e}_{\mathcal{A}},\mathbf{n}^v_p)
=d^v_r
\prod^P_{p=1}\binom{N_p-\delta_{p,q}-\delta_{p,r}}{\mathbf{s}^{[i]}_p-\delta_{p,q}\mathbf{e}_{\mathcal{A}}-\delta_{p,r}\mathbf{e}_{m}}.
\end{align*}
We then find
\begin{align}
    &\frac{1}{\binom{N}{\mathbf{s}^{[i]}}}\sum_{v\in V_q}\sum_{\mathbf{n}^v\mid d^v}\sum_{m=1}^M\sum_{r=1}^P
\zeta_m^{\mathcal{A},\mathcal{B}}\mathbf{n}^v_{m,r}
\prod^P_{p=1}A(\mathbf{s}^{[i]}_p-\delta_{p,q}\mathbf{e}_{\mathbf{A}},\mathbf{n}^v_p)\nonumber\\
=&\frac{1}{\binom{N}{\mathbf{s}^{[i]}}}
\sum_{v\in V_q}\sum_{m=1}^M\sum_{r=1}^P
\zeta_m^{\mathcal{A},\mathcal{B}}d^v_r
\prod^P_{p=1}\binom{N_p-\delta_{p,q}-\delta_{p,r}}{\mathbf{s}^{[i]}_p-\delta_{p,q}\mathbf{e}_{\mathcal{A}}-\delta_{p,r}\mathbf{e}_{m}}\nonumber\\
=&\sum_{v\in V_q}\sum_{m=1}^M\sum_{r=1}^P
\zeta_m^{\mathcal{A},\mathcal{B}}d^v_r
\frac{\mathbf{s}^{[i]}_{\mathcal{A},q}(\mathbf{s}^{[i]}_{m,r}-\delta_{\mathcal{A},\mathcal{W}_m}\delta_{q,r})}{N_q(N_r-\delta_{q,r})}
.\label{eq:ck}
\end{align}
Substituting \eqref{eq:c0} and \eqref{eq:ck} into \eqref{eq:Rijexpansion}, after some rearranging yields \eqref{eq:qijgeneral}.
\end{proof}

\section{Applications of Theorem~\ref{thm:genrates}}
\label{sec:applications}
We now describe two special cases of Theorem~\ref{thm:genrates} and illustrate its application on a bipartite network.

\subsection{Recovering the infinitesimal generator}
In the case where each vertex is in its own partition, we expect to recover the full infinitesimal generator $\mathbf{Q}$. Recall that \eqref{eq:qijgeneral} corresponds to a vertex $v$ in vertex partition cell $q$ changing from vertex-state $\mathcal{A}$ to $\mathcal{B}$. For each $p$ we have $N_p=1$, and since we have assumed the network is simple, i.e. there are no self-loops or multiple edges, it follows that $d_q^v=0$. 
We also have $d_p^v=1$ if vertex $v$ and the vertex in cell $p$ are neighbours, and $d_p^v=0$ if they are not. Since vertex $v$ is the only vertex in cell $q$ and it is in vertex-state $\mathcal{A}$, we have $\mathbf{s}_{\mathcal{A},q}^{[i]}=1$. We also have
$$\sum_{p\ne q}\left(\sum_{v\in V_q}\frac{d_{p}^v}{N_{p}}\right)
\left[\sum_{m=1}^M\zeta_m^{\mathcal{A},\mathcal{B}}\mathbf{s}_{m,p}^{[i]}\right]
=\sum_{m=1}^M\zeta_m^{\mathcal{A},\mathcal{B}} n^v_m,$$
where $n^v_m$ is the number of neighbours of $v$ that are in vertex-state $\mathcal{W}_m$. This follows from the fact that the sum over $p$ is effectively a sum over neighbours, since $d_p^v$ is zero otherwise, and $\mathbf{s}_{m,p}^{[i]}=1$ if the neighbouring vertex in cell $p$ is in vertex-state $\mathcal{W}_m$ and zero otherwise.
Thus when each vertex is in its own partition, \eqref{eq:qijgeneral} reduces to
$$\mathbf{q}_{ij}=\zeta_0^{\mathcal{A},\mathcal{B}}+\sum_{m=1}^M\zeta_m^{\mathcal{A},\mathcal{B}} n^v_m,$$
i.e. our definition of $\mathbf{Q}_{ij}$.

\subsection{Single vertex partition: population model} \label{subsec:single partition}
In the case $P=1$, Theorem \ref{thm:genrates} reduces to our result published in \cite{ward2022micro}. In this case, the lumped state can be represented by a vector, i.e. the number of vertices in each of the possible vertex-states, and so we will not use a bold font to represent lumped states. In the $P=1$ case, \eqref{eq:qijgeneral} becomes
$$
\mathbf{q}_{ij}=s_{\mathcal{A}}^{[i]}\left\{\zeta_0^{\mathcal{A},\mathcal{B}}+\frac{z}{N-1}
\left[\sum_{m=1}^{M}\zeta_{m}^{\mathcal{A},\mathcal{B}}\left(s_{m}^{[i]}-\delta_{\mathcal{A},\mathcal{W}_m}\right)\right]\right\},
$$
where $z=\sum_{v\in V}d^v/N$ is the mean degree of the network and $s_m^{[i]}$ ($s_{\mathcal{A}}^{[i]}$) is the number of vertices in lumped state $i$ that are in vertex-state $\mathcal{W}_m$ ($\mathcal{A}$). This corresponds to the population model equation derived in \cite{ward2022micro}.

\subsection{SIS epidemic on a complete bipartite graph}
For the case of SIS dynamics, the lumped states $\mathbf{s}$ are matrices of size $2\times P$, where $m=1$ corresponds to susceptible vertices and $m=2$ corresponds to infected vertices. 
For the SIS model, we have $\zeta_2^{1,2}=\beta$, $\zeta_0^{2,1}=\gamma$ and all other $\zeta_m^{\mathcal{A},\mathcal{B}}=0$. Thus if 
the transition from $\mathbf{s}^{[i]}$ to $\mathbf{s}^{[j]}$ 
corresponds to an infection in the $q$th partition, then
$$\mathbf{q}_{ij}
=\beta\frac{\mathbf{s}^{[i]}_{1,q}}{N_q}
\left\{\left(\sum_{v\in V_q}\frac{d_q^v}{N_q-1}\right)\mathbf{s}^{[i]}_{2,q}
+\sum_{p\ne q}\left(\sum_{v\in V_q}\frac{d_{p}^v}{N_{p}}\right)\mathbf{s}^{[i]}_{2,p}
\right\}.$$
Similarly, if the transition from $\mathbf{s}^{[i]}$ to $\mathbf{s}^{[j]}$ 
corresponds to a recovery in the $q$th partition, then
\begin{equation}
\mathbf{q}_{ij}
=\gamma \mathbf{s}^{[i]}_{2,q}.
\label{eq:SISrecovery}
\end{equation}
Note that this is the exact recovery rate for state $\mathbf{s}^{[i]}$, regardless of the choice of vertex-partition, since there are $\mathbf{s}^{[i]}_{2,q}$ infected vertices in $\mathbf{s}^{[i]}$.

We'll now use this to write out the lumped Markov chain equations for the SIS model on a complete bipartite graph, where the vertex-partition corresponds to the bipartite partition. For $N_1\ne N_2$, the automorphism group of a complete bipartite graph is $S_{N_1}\times S_{N_2}$, i.e. all possible pairs of permutations consisting of a permutation of the vertices in $V_1$ and a permutation of the vertices in $V_2$. Thus the microstates in the exact lumping correspond to all possible counts of the number of infected vertices in each of the vertex partition cells, which is exactly what the approximate lumping uses. Consequently, we expect the approximate lumping formula to recover the exact lumping. We have $P=2$, so the lumped states $\mathbf{s}$ are two-by-two matrices. For $v\in V_1$ we have $d_1^v=0$ and $d_2^v=N_2$; similarly for $v\in V_2$ we have $d_1^v=N_1$ and $d_2^v=0$.
Note that 
$$
\sum_{v\in V_q}d_q^v=0, \quad \mbox{ and } \quad \sum_{v\in V_q}d_{p}^v=N_qN_{p},
$$ 
where $p$ is the alternative partition to $q$. Thus if the transition from $\mathbf{s}^{[i]}$ to $\mathbf{s}^{[j]}$ 
corresponds to an infection in the $q$th partition, then
$$\mathbf{q}_{ij}
=\beta \mathbf{s}^{[i]}_{1,q}\mathbf{s}^{[i]}_{2,p}.
$$
In state $\mathbf{s}^{[i]}$, there are $\mathbf{s}^{[i]}_{1,q}$ susceptible vertices in $V_q$ and each of these has $\mathbf{s}^{[i]}_{2,p}$ infected neighbours, thus we obtain the exact lumped transition rate. The recovery rates \eqref{eq:SISrecovery} are also exact.

We can also write out the lumped master equation for this case. Note that for the lumped state $\mathbf{s}$, we have $\mathbf{s}_{1,1}+\mathbf{s}_{2,1}=N_1$ and $\mathbf{s}_{1,2}+\mathbf{s}_{2,2}=N_2$, thus we can write $\mathbf{s}$ in terms of just two numbers, $k_1=\mathbf{s}_{2,1}$ and $k_2=\mathbf{s}_{2,2}$, so $k_p$ is the number of infected vertices in the $p$th partition. Thus we write the probability of being in the lumped state $\mathbf{s}$ as $x_{k_1,k_2}$, i.e. the probability of there being $k_1$ infected nodes in partition cell 1 and $k_2$ infected nodes in partition cell 2. 
There are four possible transitions, corresponding to an infection or a recovery in each of the two vertex-partition cells.
Consequently, if we assume that $x_{k_1,k_2}=0$ for $k_1,k_2<0$ and $k_1,k_2>N$ then summing over the four possible transitions yields
\begin{align*}
	\dot{x}_{k_1,k_2}=&\beta(N_1-k_1+1)k_2x_{k_1-1,k_2}+\beta k_1(N_2-k_2+1)x_{k_1,k_2-1}\\
	&+\gamma(k_1+1)x_{k_1+1,k_2}+\gamma(k_2+1)x_{k_1,k_2+1}\\
	&-\left[\beta(N_1-k_1)k_2+\beta k_1(N_2-k_2)+\gamma(k_1+k_2)\right]x_{k_1,k_2}.
\end{align*}

We can simplify the bipartite graph case further and assume that $N_2=1$ (hence $N_1=N-1$), which corresponds to a star graph. In this case, $k_2$ can only be 0 or 1, so we have
\begin{align*}
	\dot{x}_{k_1,0}=&\gamma(k_1+1)x_{k_1+1,0}+\gamma x_{k_1,1}
	-\left[\beta k_1+\gamma k_1)\right]x_{k_1,0},\\
	\dot{x}_{k_1,1}=&\beta(N-k_1)x_{k_1-1,1}+\beta k_1x_{k_1,0}
	+\gamma(k_1+1)x_{k_1+1,1}\\
	&-\left[\beta(N-1-k_1)+\gamma(k_1+1)\right]x_{k_1,1}.
\end{align*}

\subsection{General procedure for applying Theorem~\ref{thm:genrates}}
\label{subsec:application_lumping}
Now, we briefly present the wide applicability of Theorem~\ref{thm:genrates}, explaining the procedure by which it can be applied to a broad range of node dynamics and an arbitrary choice of the vertex-set partition. Concerning node dynamics, the following processes can be handled among many others.
\begin{itemize}
	\item Beyond the widely used SIS and SIR dynamics one can use SEIR dynamic when the exposed compartment $E$ is also taken into account, as well as all other similar variants. For example, introducing a vaccination state $V$ and applying a contact tracing state $T$ lead to SIVS, SITR etc. Considering the parallel propagation of two infectious diseases, more complicated models also fall within our framework.	
	\item Information spread on networks leads to many different node-dynamics. One of them is rumour spreading when node states are ignorant ($I$), spreader ($S$) and stifler ($R$), which resembles SIR epidemic but where the transition from $S$ to $R$ depends also on the number of neighbours in the $S$ and $R$ states. Information spread is also modelled by using the node states: ``Unknown'', the individual has not yet come into contact with the information, ``Known'', the individual has received the information, but is not willing to propagate it,  ``Accepted'', the individual accepts the information and then propagates it, and ``Exhausted'', after propagating the information to their neighbours, the individual will lose his interests in it. The concurrent propagation of two types of information is modelled by the node states $S$, $I_1$ and $I_2$, where the effect of one information to the other can also be accounted for by the appropriate choice of the rate functions.	
	\item The concurrent spread of epidemic and information can be described by the node states susceptible and aware ($S_{\rm a}$), susceptible and not  aware ($S_{\rm na}$), infected and aware ($I_{\rm a}$), infected and not  aware ($I_{\rm na}$). This can also be extended with a treatment class ($T$).	
	\item Propagation of neuronal activity can be modelled by the node states  quiescent ($Q$) and active ($A$) with both excitatory and inhibitory neurons. The effect of different neurons to each other can be described by the rate functions.
\end{itemize}
See \cite{gleeson2013binary,porter2016dynamical,kiss2017book,ward2018general,ward2019exact} for more information about, and examples of, models within our framework. To translate such examples into our framework, it is necessary to associate the corresponding model rate constants with the set of functions that constitute the VSTM in \ref{eq:affinevstm}. Typical models have far fewer vertex-state transitions and rate constants than the general case, simplifying what needs to be considered.

Concerning the network structure one can specify a vertex partition, for which we list a few possibilities below.
\begin{itemize}
	\item If the nodes play a similar role in the network, then choosing a single partition ($P=1$) is a reasonable choice and corresponds to the `well mixed' case where network structure is essentially ignored, with the rates scaled by the network density. This case was derived in Section~\ref{subsec:single partition}.
	\item In some cases, nodes can be divided into two groups, for example highly and weakly connected nodes, then choosing two partitions, $P=2$, is natural.
	\item If the network is given by a bipartite graph, then the two node groups lead again to $P=2$.	
	\item The case of $k$-partite graphs can be handled with $P=k$ partitions. We note that in the case of complete $k$-partite graphs the lumping is exact.	
	\item A natural choice for vertex partitions is based on the node degrees, i.e. two nodes are in the same vertex partition if their degree is equal.
\end{itemize}

Once the node dynamics is specified through the rate functions \eqref{eq:affinevstm} and the vertex partitions are given according to Definitions \ref{def:vpppms} and \ref{def:vpal}, one can determine the generator \eqref{eq:qijgeneral} as follows.
The macro-states $\mathbf{s}^{[i]}$ are $M\times P$ matrices defined in Definition \ref{def:vpal} by specifying the number of vertices in a given state being in a given partition. The coefficients $\zeta_m^{\mathcal{A},\mathcal{B}}$ are determined by the transition functions given in \eqref{eq:affinevstm}.
Considering only the few examples of node-dynamics and partitions listed above, several dozens of models can be derived based on our main Theorem \ref{thm:genrates} since each node dynamics can be combined with each partition.

Note that the generator in \eqref{eq:qijgeneral} is an $n\times n$ matrix, where $n$ is given in \eqref{eq:lumpedn}. For example, when we have only one partition, $P=1$ and two node-states, $M=2$, then $n=N+1$ which is significantly smaller than the full system size $2^N$. In the case of $M=3$ node-states we have $n=\mathcal{O}(N^2)$ which can be large but still much smaller than the full system size $3^N$. In the case of $P=2$ partitions and $M=2$ node-states one has $n=\mathcal{O}(N^2)$, while for $M=3$ node-states it is $n=\mathcal{O}(N^4)$. Thus the size of the lumped system is polynomial in $N$ compared to exponential for the full system. We will get a further significant decrease in system size in the next section. Note however that using the generator \eqref{eq:qijgeneral} provides information about the probability distribution over macrostate-space, and in systems with absorbing states, one can estimate the probability of and time to absorption \cite{ward2019exact}.

\section{Large $N$ limit of density dependent population processes}
\label{sec:largeN}
The approximate lumping process derived in Section~\ref{sec:Ppartitions} significantly reduces the number of equations that need to be considered, while introducing some approximation error. However, the number of equations will often still be very large, see the calculations at the end of the previous section. 
However, our approximation \eqref{eq:qijgeneral} is a ``density-dependent population process'' \cite{ethier2009markov}, which in the large $N$ limit converges to a smaller system of $M\times P$ differential equations. We will now describe this in more detail.

\subsection{Mean-field limit of a general density dependent process}
For positive integer $N$, $\xi\in\mathbb{Z}^d$, $E\subset\mathbb{R}^d$, and a collection of non-negative functions $\lambda_{\xi}:E\rightarrow\mathbb{R}_{\ge0}$, let $$E_N=E\cap\{k/N\mid k\in\mathbb{Z}^d\},$$ and assume that $y\in E_N$ and $\lambda_{\xi}(y)>0$ imply that $y+\xi/N\in E_N$. Then a density dependent family corresponding to $\lambda_{\xi}$ is a sequence $\{Y_N\}$ of Markov jump processes such that $Y_N$ has state-space $E_N$ and transition intensities $$q^{(N)}_{x,y}=N\left[\lambda_{N(y-x)}(x)+\mathcal{O}\left(\frac{1}{N}\right)\right],\;x,y\in E_N.$$
Let $$F(y)=\sum_{\xi\in\mathbb{Z}^d} \xi\lambda_{\xi}(y),$$
then provided that for each compact $K\subset E$,
$$\sum_{\xi\in\mathbb{Z}^d}|\xi|\sup_{y\in K}\lambda_{\xi}(y)<\infty,$$
and there exists an $M_K>0$ such that
$$|F(x)-F(y)|\le M_K|x-y|,\quad x,y\in K,$$
then in the limit $N\rightarrow\infty$ there is almost sure convergence between the Markov chain jump process $Y_N(t)$ and $y(t)$, where $y(t)$ is the solution to system of differential equations $$\dot{y}=F(y).$$
A more precise statement can be found in Ethier and Kurtz's book \cite{ethier2009markov}.

As a simple example of a density dependent family, consider the SIS model with population $N$. Suppose that if there are $i$ susceptible individuals and $j=N-i$ infected individuals then the infection rate is $$q_{(i,j)(i-1,j+1)}=N\beta\frac{i}{N}\frac{j}{N},$$ and the recovery rate is $$q_{(i,j)(i+1,j-1)}=N\gamma\frac{j}{N}.$$ This corresponds to the standard SIS stochastic compartmental model birth-death process. The possible values of $\xi$ are $(-1,+1)$ and $(+1,-1)$, so for $y=(y_1,y_2)$ we have  $\lambda_{(-1,+1)}(y)=\beta y_1y_2$ and $\lambda_{(+1,-1)}(y)=\gamma y_2$. Consequently
\begin{align*}
F(y)&=\sum_{\xi}\xi\lambda_{\xi}(y)
=(-\beta y_1y_2+\gamma y_2,+\beta y_1y_2-\gamma y_2).
\end{align*}
Thus the familiar compartmental SIS model ODEs are the large $N$ limit of the corresponding stochastic birth-death process.

\subsection{Limiting equations of vertex-partition lumping}
To connect the approach presented in the previous subsection to vertex-partition lumping, we first introduce some notation.
Let $\mathbf{e}(\mathcal{W}_m,p)\in\{0,1\}^{M\times P}$ be a matrix whose $m,p$th entry is 1, and all other entries are zero.
Let $\xi^{\mathcal{A},\mathcal{B}}_q=\mathbf{e}(\mathcal{B},q)-\mathbf{e}(\mathcal{A},q)$, so a transition from $\mathbf{s}$ to $\mathbf{s}+\xi^{\mathcal{A},\mathcal{B}}_q$ corresponds to a vertex in partition $V_q$ changing from vertex-state $\mathcal{A}$ to $\mathcal{B}$. Then we can write the transition rate \eqref{eq:qijgeneral} as
\begin{align}
\mathbf{q}_{\mathbf{s},\mathbf{s}+\xi^{\mathcal{A},\mathcal{B}}_q}=N\frac{\mathbf{s}_{\mathcal{A},q}^{[i]}}{N}\left\{
\zeta_0^{\mathcal{A},\mathcal{B}}+\frac{N}{N_q}\sum_{p=1}^P
\left(\frac{\sum_{v\in V_q}d^v_p}{N_r-\delta_{q,p}}\right)
\left[\sum_{m=1}^{M}\zeta_m^{\mathcal{A},\mathcal{B}}
\frac{\left(\mathbf{s}_{m,p}^{[i]}-\delta_{\mathcal{A},\mathcal{W}_m}\delta_{q,p}\right)}{N}
\right]\right\}.
\label{eq:qsxiN}
\end{align}
We consider the density $\mathbf{y}=\mathbf{s}/N$
and we assume that
\begin{equation}
\lim_{N\rightarrow\infty}\frac{N_p}{N}=\theta_p>0
\quad\text{and}\quad
\lim_{N\rightarrow\infty}\sum_{v\in V_q}\frac{d_p^v}{N_p}=z_{p,q}\ge0,
\label{eq:NpN}
\end{equation}
with $\theta_p$ and $z_{p,q}$ finite. Note that $\theta_p$ is the fraction of vertices in partition $p$, so for this to be finite in the large $N$ limit, all vertex partitions must scale with $N$. Also, $z_{p,q}$ is the average number of edges between partitions $p$ and $q$, averaged over the number of vertices in vertex partition $p$. We also have
\begin{align*}
\sum_{v\in V_q}\frac{d_p^v}{N_q-1}&=\left(\sum_{v\in V_q}\frac{d_p^v}{N_q}\right)\frac{N_q}{N_q-1}
=\left(\sum_{v\in V_q}\frac{d_p^v}{N_q}\right)\left(1+\frac{1}{N\left[\frac{N_q}{N}-\frac{1}{N}\right]}\right).
\end{align*}
Thus we can write \eqref{eq:qsxiN} as
\begin{equation*}
\mathbf{q}_{\mathbf{s},\mathbf{s}+\xi^{\mathcal{A},\mathcal{B}}_q}=N\left[\lambda_{\xi^{\mathcal{A},\mathcal{B}}_q}\left(\frac{\mathbf{s}}{N}\right)+\mathcal{O}\left(\frac{1}{N}\right)\right],
\end{equation*}
where
\begin{equation}
\lambda_{\xi^{\mathcal{A},\mathcal{B}}_q}(\mathbf{y})=\mathbf{y}_{\mathcal{A},q}\left(\zeta_0^{\mathcal{A},\mathcal{B}}
+\sum_{p=1}^P\frac{z_{p,q}}{\theta_q}\sum_{m=1}^M
\zeta_m^{\mathcal{A},\mathcal{B}}\mathbf{y}_{m,p}\right).
\label{eq:lambda-general}
\end{equation}
Note that $\lambda_{\xi^{\mathcal{A},\mathcal{B}}_q}$ returns a scalar but $\xi^{\mathcal{A},\mathcal{B}}_q$ is a matrix and it identifies the $\mathcal{A},\mathcal{B}\in\mathcal{W}$ and $0\le q\le P$ used in \eqref{eq:lambda-general}. In the large $N$ limit, we have a system of matrix differential equations for $\mathbf{y}$, given by
\begin{equation}
\dot{\mathbf{y}}=\sum_{\mathcal{A}\in\mathcal{W}}\sum_{\mathcal{B}\ne\mathcal{A}}\sum_{q=1}^P
\xi^{\mathcal{A},\mathcal{B}}_q\lambda_{\xi^{\mathcal{A},\mathcal{B}}_q}(\mathbf{y}).
\label{eq:mean-field-general}
\end{equation}

\subsection{Application of the mean-field-limit approach}
Here we present the applicability of the mean-field limit derivation shown in the previous subsection. Using our approach one can derive the mean-field limit equation \eqref{eq:mean-field-general} for each node-dynamic and vertex partition listed in Section \ref{subsec:application_lumping}. The model dynamics specify the transitions $\xi^{\mathcal{A},\mathcal{B}}_q$ and the corresponding transition rates $\zeta_m^{\mathcal{A},\mathcal{B}}$. 
The vertex-set partition can also be chosen quite generally, subject to the conditions \eqref{eq:NpN}, which assume that as $N$ tends to infinity, the proportion of vertices in each partition, $\theta_p$, and the proportion of edges between partitions, $z_{p,q}$, tend to constant values. 
Once these constants have been determined, one can formulate transition functions $\lambda_{\xi^{\mathcal{A},\mathcal{B}}_q}$ for each transition $\xi^{\mathcal{A},\mathcal{B}}_q$ using \eqref{eq:lambda-general}. The dependent variable in the mean-field equations \eqref{eq:mean-field-general} is the time-dependent matrix $\mathbf{y}$, which is the scaled version of the lumped variables, i.e. $\mathbf{y}=\mathbf{s}/N$, meaning that $\mathbf{y}_{m,p}$ is the proportion of vertices in vertex-state $\mathcal{W}_m$ and in partition cell $V_p$. Thus the number of differential equations in \eqref{eq:mean-field-general} is $M\times P$. 
Considering only the few examples of node-dynamics and partitions listed in Section~\ref{subsec:application_lumping}, several dozens of mean-field equations can be derived based on our approach since each node dynamic can be combined with each partition.

\section{Degree-based mean-field}
\label{sec:dbMF}
We now relate the large $N$ limit of our vertex-partition lumping to the well known degree-based mean-field.
Consider the vertex partition where for each $k>0$, we have that $V_k$ is the set of vertices with degree $k$. Thus $N_k$ is the number of vertices with degree $k$ and $p_k=N_k/N$ is the fraction of vertices with degree $k$, i.e. the degree distribution. We assume that $p_k>0$ for all $k$ (although the same approach could be applied to cases where there are $k$ such that $p_k=0$) and that there is a maximum degree $k_{\rm max}$, so that the vertex-set is partitioned into a finite number of cells. We will call a partition based on vertex degree ``degree-based mean-field'' and we will derive the ODEs for the SIS model. Let $\mathcal{W}_1$ correspond to susceptible nodes and $\mathcal{W}_2$ correspond to infected nodes. Recall that for the SIS model we have
 $\zeta_{2}^{1,2}=\beta$ and $\zeta_0^{2,1}=\gamma$, and all other $\zeta_m^{\mathcal{A},\mathcal{B}}=0$. Since $\mathbf{y}_{1,k}+\mathbf{y}_{2,k}=p_k$, we will refer to the fraction of infected nodes as $y_k=\mathbf{y}_{2,k}$, from which we can infer $\mathbf{y}_{1,k}=p_k-y_k$. Thus we only need to write the differential equations for $y_k$, and we will write $y=(y_1,y_2,\dots,y_{k_{\rm max}})$. Consequently we only need the second column of the matrices $\xi^{\mathcal{A},\mathcal{B}}_q$. Hence let $\xi_k$ be a vector of zeros with a one in the $k$th entry, then $\xi_k$ corresponds to an infection and $-\xi_k$ corresponds to a recovery. Using \eqref{eq:lambda-general}, the large $N$ infection rate is,
 $$\lambda_{\xi_k}(y)=\beta(p_k-y_k)\sum_{k'}\frac{z_{k',k}}{p_k}y_{k'},$$
 the large $N$ recovery rate is
 $$\lambda_{-\xi_k}(y)=\gamma y_k,$$
and hence the evolution equations are
\begin{equation}
\dot{y}_k=-\gamma y_k+\beta(p_k-y_k)\sum_{k'}\frac{z_{k',k}}{p_k}y_{k'}.
\label{eq:dbmf}
\end{equation}

We will now show that \eqref{eq:dbmf} is equivalent to the Eames and Keeling \cite{eames2002modeling} and Pastor-Satorras and Vespignani \cite{pastor2001epidemic} degree-based mean-field approximations. Eames and Keeling use $[I^k]$ to denote the number of infected nodes with $k$ neighbours and $[S^kI^{k'}]$ to denote the number of partnerships between a susceptible node with $k$ partners and an infected node with $k'$ partners. The dynamics is then described by
$$\frac{d[I^k]}{dt}=-\gamma[I^k]+\beta\sum_{k'}[S^kI^{k'}],$$
and this is closed with the approximation
$$[S^kI^{k'}]\approx\frac{[S^k]}{N_k}\times\frac{[I^{k'}]}{N_{k'}}\times[kk'],$$
where $[kk']$ is the number of partnerships between individuals with $k$ and $k'$ partners. Thus the Eames and Keeling degree-based mean-field is
\begin{equation}
\frac{d[I^k]}{dt}=-\gamma[I^k]+\beta\sum_{k'}\frac{[S^k]}{N_k}\frac{[I^{k'}]}{N_{k'}}[kk'].
\label{eq:EKdbmf}
\end{equation}
Let $y_k=[I^k]/N$, and noting that since $[S^k]+[I^k]=N_k$ we have $[S^k]/N=p_k-y_k$, then dividing \eqref{eq:EKdbmf} through by $N$ yields
$$\dot{y}_k=-\gamma y_k +\beta(p_k-y_k)\sum_{k'}\frac{[kk']}{p_kN_{k'}}y_{k'}.$$
Since $[kk']$ is the number of edges between degree $k$ and degree $k'$ vertices, we have
$$[kk']=\sum_{v\in V_k}d^v_{k'},$$
and consequently, using \eqref{eq:NpN}, the Eames and Keeling degree-based mean-field is equivalent to \eqref{eq:dbmf}. It is also easy to show that the Eames and Keeling degree-based mean-field is equivalent to the Pastor-Satorras and Vespignani \cite{pastor2001epidemic} degree-based mean-field, and we provide details of this in the Supplementary Information.

\section{Individual-based mean-field}
\label{sec:IBMF}
We will now show how our approach can be used to derive individual-based mean-field approximations. Consider a graph with $N$ vertices and $N_{\rm e}$ isomorphic copies of this graph. We will call the graph that is copied the base graph, and thus the collection of all copies of the base graph consists of $NN_{\rm e}$ vertices. We will apply approximate lumping to this collection of graphs by choosing the partition of vertices, $\Pi_V$, so that each partition cell $V_i$ has the corresponding vertex in each of the $N_{\rm e}$ copies of the graph. Thus there are $P=N$ partition cells, and we can increase the total number of vertices by increasing $N_{\rm e}$, the number of copies of the base graph. Applying our approximate lumping approach produces an ensemble average over the collection of isomorphic base graphs.

To apply our approximate lumping approach, we need to compute the fraction of vertices in each vertex partition cell, $\theta_i$, and the mean number of edges between vertex partition cells $i$ and $j$, $z_{i,j}$, as defined in \eqref{eq:NpN}. Note that our notation is slightly modified here, where we are taking the limit $N_{\rm e}\rightarrow\infty$ (and hence the total number of vertices in the collection of graphs) rather than $N$. Since there are $N_{\rm e}$ vertices in each partition cell and $NN_{\rm e}$ vertices in total, we have that $\theta_i=1/N$. Let $\mathbf{A}$ be the adjacency matrix of the base graph, so the component $\mathbf{A}_{ij}$ is one if vertices $i$ and $j$ are connected in the base graph and zero otherwise. Thus if $V_i\in\Pi_{V}$, then the number of neighbours of vertex $v\in V_i$ that are in the partition cell $V_j$ is $d^v_j=\mathbf{A}_{ij}$ (i.e. one if $i$ and $j$ are connected in the base graph, and zero otherwise). We will assume for $v\in V_i$ that $d^v_i=0$, i.e. there are no self-loops in the base graph. Consequently, since $N_i=N_{\rm e}$  for all $V_i\in \Pi_V$, we have
\begin{align*}
z_{i,j}&=\lim_{N_{\rm e}\rightarrow\infty}\sum_{v\in V_i}\frac{d^v_j}{N_{\rm e}}
=\mathbf{A}_{ij}.
\end{align*}

We are now in a position to make use of \eqref{eq:lambda-general}, but note that following the approach described in Section~\ref{sec:largeN}, we would use the density variable $\mathbf{y}=\mathbf{s}/(NN_{\rm e})$, i.e. the total fraction of vertices in each vertex-state and vertex partition. However, individual-based mean-field approximations are typically based on the `probability' that a vertex is in a given vertex-state. We can obtain a similar quantity here by instead using $\mathbf{y}=\mathbf{s}/N_{\rm e}$, i.e. the fraction of base graphs in which each vertex is in each vertex-state. We again let $\xi$ correspond to a vertex in $V_i$ changing from $\mathcal{A}$ to $\mathcal{B}$ and so \eqref{eq:qsxiN} becomes
\begin{equation*}
\mathbf{q}_{\mathbf{s},\mathbf{s}+\xi^{\mathcal{A},\mathcal{B}}_i}=N_{\rm e}\frac{\mathbf{s}_{\mathcal{A},i}}{N_{\rm e}}\left(\zeta_0^{\mathcal{A},\mathcal{B}}+
\sum_{j\ne i}\mathbf{A}_{ij}
\left[\sum_{m=1}^M\zeta_m^{\mathcal{A},\mathcal{B}}\frac{\mathbf{s}_{m,j}}{N_{\rm e}}\right]+\mathcal{O}(1/N_{\rm e})\right).
\end{equation*}
Consequently, using $\mathbf{y}=\mathbf{s}/N_{\rm e}$, the large $N_{\rm e}$ transition rate is
$$\lambda_{\xi^{\mathcal{A},\mathcal{B}}_i}(\mathbf{y})=\mathbf{y}_{\mathcal{A},i}\left(\zeta_0^{\mathcal{A},\mathcal{B}}+
\sum_{j\ne i}\mathbf{A}_{ij}
\left[\sum_{m=1}^M\zeta_m^{\mathcal{A},\mathcal{B}}\mathbf{y}_{m,j}\right]\right).$$
From this we recognise the transition rates that appear in standard individual-based mean-field equations, in which pairwise interaction terms are included according to the graph's adjacency matrix. For example, for the SIS model the evolution equation for the `probability' $y_i$ that vertex $i$ is infected is \cite{van2011n,kiss2017book}
$$\dot{y}_i=-\gamma y_i+\beta (1-y_i) \sum_{j\ne i} \mathbf{A}_{ij} y_j.$$

\section{The configuration model}
\label{sec:configmodel}
In the previous section, we saw how our approximate lumping approach could be applied to isomorphic copies of a given graph to obtain individual-based mean-field approximations. In this section we consider how this approach can be extended to derive mean-field approximations for dynamics on families of graphs, specifically configuration models.

Suppose that the vertices $V$ of a graph are labelled $1,2,\dots,N$, then a degree sequence is a sequence of integers $d_1,d_2,\dots,d_N$, such that $\sum_id_i=2M$ and for each $i$, $d_i\le N-1$. A random graph can be constructed from a given degree sequence by allocating each node $i$ with $d_i$ `stubs' and then picking pairs of stubs at random without replacement from the collection of all unpaired stubs to form edges in the graph. Such a graph may have self-edges and multi-edges. While the theory we have developed assumed simple networks, we conjecture that extending this approach to graphs with self- and multi-edges only introduces $\mathcal{O}(1/N)$ corrections to \eqref{eq:qijgeneral} in the large $N$ limit. Here we consider the family of all such graphs for a given degree sequence, and we will refer to these as configurations (of the associated degree sequence). The number of configurations can be computed fairly easily by considering the combinations of ways that pairs of stubs can be chosen, while accounting for the fact that the order in which pairs of stubs are selected is not important, which yields
\begin{align*}
    N_{\rm CM}(M)&=\frac{1}{M!}\binom{2M}{2}\binom{2M-2}{2}\cdots\binom{2}{2}
    =\frac{2M!}{2^MM!}.
\end{align*}
Thus there are $NN_{\rm CM}(M)$ vertices in the collection of configurations.

It is also easy to show that for a pair of vertices $i,j$, the average number of edges between $i$ and $j$ across all configurations is
$$\frac{d_id_j}{2M-1}.$$
To see this, note that there are $d_id_j$ ways to match each of the $d_i$ stubs to each of the $d_j$ stubs. Having used a pair of stubs to do this, there are then $2M-2$ stubs that remain to be matched, and there are $N_{\rm CM}(M-1)$ ways to do this. Note that some of these remaining pairs of stubs may also lead to edges between $i$ and $j$, but this exactly accounts for all possible multi-edges between $i$ and $j$. Thus
$$d_id_j\frac{N_{\rm CM}(M-1)}{N_{\rm CM}(M)},$$
and cancellation leads to the result.

To apply the vertex-partition lumping to the configuration model, we need to consider the large $N$ limit. Thus we suppose that for a given $N$, the degree sequence is sampled at random and the degrees of the $N$ vertices are $d^N_1,d^N_2,\dots,d^N_N$, where there is an $M_N$ such that $\sum_id^N_i=2M_N$. Furthermore, we assume that for each $0<k\le k_{\rm max}$, the number of vertices of degree $k$ is $n_k$ and
$$\lim_{N\rightarrow\infty}\frac{n_k}{N}=p_k,$$
where $p_k>0$ for each $k$ and $\sum_kp_k=1$. 
Thus in the large $N$ limit, the mean degree of the network will converge to $z=\sum_kkp_k$.

We again consider a vertex-partition based on degree for the family of configurations of $N$ vertices. Thus for finite $k_{\rm max}$ the vertex partition is $\Pi_V=\{V_1,V_2,\dots,V_{k_{\rm max}}\}$. As described already, there are $NN_{\rm CM}(M_N)$ vertices across the family of configurations and hence there are $N_k=|V_k|=n_kN_{\rm CM}(M_N)$ vertices of degree $k$. In order to apply our general formula \eqref{eq:lambda-general}, it remains to compute $z_{k',k}$, the average number of edges between vertices of degree $k$ and vertices of degree $k'$, with respect to the number of vertices of degree $k'$. We have seen that the average number of edges between nodes with degree $k$ and $k'$ is $kk'/(2M_N-1)$, so the total number of edges between nodes of degree $k$ and $k'$ across all configurations is this multiplied by the total number of vertices of degree $k$ and the number of vertices of degree $k'$ in a single configuration. Thus it follows that 
\begin{align}
z_{k',k}&=\lim_{N\rightarrow\infty}\sum_{v\in V_k}\frac{d^v_{k'}}{N_{k'}}
=\lim_{N\rightarrow\infty}\frac{kk'}{2M_N-1}\frac{N_kn_{k'}}{N_{k'}}
=\frac{kk'p_k}{z}.\label{eq:zkpk}
\end{align}

Focusing on the SIS model and using the notation from Section~\ref{sec:dbMF}, we can substitute \eqref{eq:zkpk} into \eqref{eq:dbmf}, which yields
\begin{equation*}
\dot{y}_k=-\gamma y_k+\beta(p_k-y_k)\sum_{k'}\frac{kk'}{z}y_{k'}.
\end{equation*}
Furthermore, substituting $y_k=\rho_k p_k$, rearranging and rescaling recovers the uncorrelated degree-based mean-field \cite{pastor2001epidemic}.

\section{Discussion}
\label{sec:discussion}
In this paper we have derived mean-field approximations for a broad class of dynamical processes on networks directly from their exact Markov chain description. We have done this using the method of approximate lumping, where macrostates are defined in terms of the number of vertices in each vertex-state in subsets of vertices that form a partition of the vertex set. We have proved that this approach results in a density dependent population process, from which the large $N$ limiting behaviour can be described in terms of a relatively small number of equations, specifically $M\times P$ equations, where $M$ is the number of vertex states and $P$ is the size of the vertex partition. We have shown how this approach can be used to derive degree-based and individual-based mean-field approximations.

Given how involved the direct calculations are, it is surprising and impressive that we recover exactly the well-known degree-based and individual-based mean-field approximations. However, we emphasise that the use of approximate lumping means that not only is the averaging process clear, it tells us under what circumstances the approximation would be exact, i.e. the Markov chain that corresponds to the density dependent population process. Our approach also highlights that there are two sources of error. The main, and uncontrolled, source of error arises from the choice of vertex-partition that defines the partition of microstate-space in the approximate lumping. The second arises from the large $N$ limit, but this vanishes as $N$ becomes large.

Our methodology formalises the process of obtaining mean-field equations for a given dynamical model, eliminating the need to rely on intuitive probabilistic arguments. This approach also explicitly reveals the type of averaging represented by mean-field approximations on networks.
Our results apply in a broad range of cases, allowing researchers working with new models---within the class of single-vertex transition models with local, affine transition functions considered here---to easily obtain a range of mean-field approximations, in a controlled manner.

The mean-field approximations that we have derived using our approximate lumping approach ignore dynamical correlations between the states of neighbouring vertices. Such correlations are taken into account in higher-order, edge-based mean-field approximations \cite{kiss2017book}, that are typically derived using moment closure arguments.
In a follow-up paper we will show how our approach can also be adapted to derive such edge-based mean-field approximations, extending its generality.
A high-accuracy form of mean-field approximation are ``approximate master equations'' \cite{gleeson2011high,gleeson2013binary}, which are based on the number of susceptible/infected nodes of degree $k$ with $m$ infected neighbours. While it may seem that these might be derived from our approach, we have been unable to do so because it is not clear how the counts change when vertices recover or become infected. In particular, it seems that one must know the degree and number of infected neighbours of the neighbours of a vertex that changes vertex-state.

Our ambition is to quantify mean-field approximation error in terms of the dynamics and network structure. Our approach in this paper highlights that the main source of error results from the choice of vertex-partition and how far the corresponding lumping is from being exact. Establishing quantitative estimates for the resulting error is a critical open challenge. An attempt to do this is described in \cite{ward2022micro} for the case of the SISa model and where vertices are not partitioned, but in this case the error could not be entirely unravelled from the full Markov chain. However, it may be that hierarchies of approximations can be constructed in which the error decays monotonically \cite{khudabukhsh2019approximate}. In this paper we have shown how our approach can be applied to families of graphs, and this suggests that it may be also possible to use our approach to derive mean-field approximations for models in which the network co-evolves with the vertex-state dynamic. We would also like to generalise our approach to models with nonlinear VSTMs. 


\clearpage
\includepdf[pages=-]{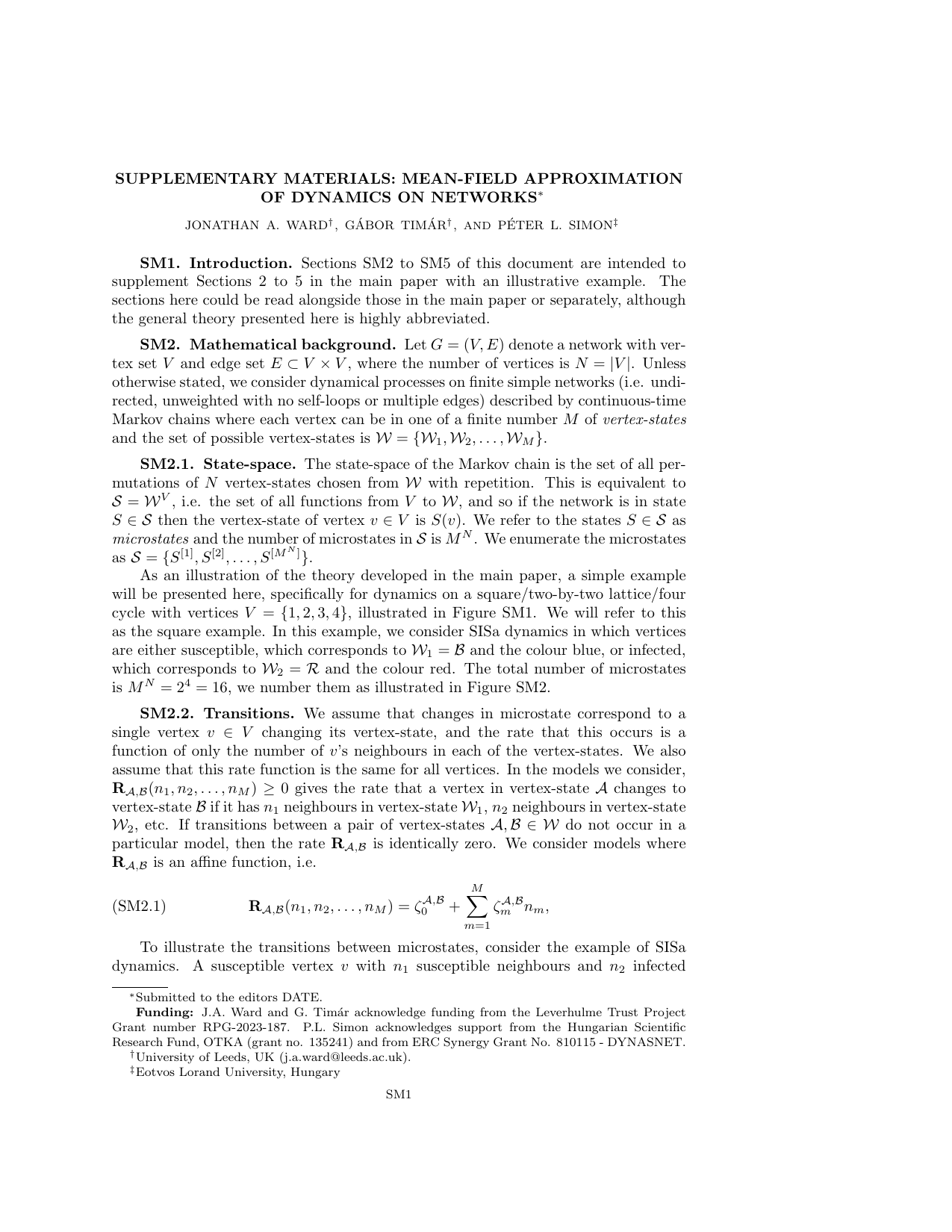}

\end{document}